\theoremstyle{plain}
\newtheorem{theorem}{Theorem}
\newtheorem*{theoremB}{Theorem B}
\newtheorem{prop}{Proposition}
\newtheorem*{thmA}{Theorem A}
\newtheorem{lem}{Lemma}
\newtheorem{cor}{Corollary}
\theoremstyle{definition}
\newtheorem{defn}{Definition}
\newtheorem{remark}{Remark}
\newtheorem*{remark*}{Remark}
\numberwithin{theorem}{section}
\numberwithin{equation}{section}
\newcommand{\LL}{\mathscr{L}}
\newcommand{\Univ}{\mathscr{U}}
\newcommand{\rUniv}{\underline{{u}}}
\newcommand{\Prim}{\operatorname{Prim}}
\newcommand{\ad}{\operatorname{ad}}
\newcommand{\rLie}{\mathrm{rLie}}
\newcommand{\gr}{\operatorname{gr}}
\newcommand{\grM}{\operatorname{gr}^{\mathscr M}}
\newcommand{\grZ}{\operatorname{gr}^{{\mathcal Z}, p}}
\newcommand{\catlie}{\mathbf{Lie}}
\newcommand{\catReslie}{\mathbf{rLie}}
\newcommand{\catAlg}{\mathbf{Alg}}
\newcommand{\catHopf}{\mathbf{Hopf}}
\newcommand{\FF}{\mathbb{F}}
\newcommand{\Z}{\mathbb Z}
\newcommand{\kernel}{\mathrm{ker}}
\newcommand{\argu}{\hbox to 1.5ex{\hrulefill}}  
\title{Presenting the Zassenhaus Lie algebra by the Magnus Lie algebra}
\author{Ettore Marmo, David Riley and Thomas Weigel}
\date{\today}
\keywords{Magnus Lie algebra associated to a group, restricted  Zassenhaus Lie algebra associated to a group, Jennings filtration, graded restricted Lie algebra, restrictification functor}
\subjclass[2010]{20F40; 17B70, 20F14}
\begin{document}
\setcounter{page}{1}

\begin{abstract}
It is shown that the Zassenhaus restricted $\FF_p$-Lie algebra of a (pro-$p$) group $G$ can be presented by the Magnus Lie algebra of $G$. For the class of (pro-$p$) groups for which the terms of the lower central series are torsion-free, 
 the
Zassenhaus restricted $\FF_p$-Lie algebra can be explicitly computed from the Magnus Lie algebra. 
These results apply to orientable surface groups, right-angled Artin groups, pure braid groups, fundamental groups of supersolvable hyperplane arrangements and fundamental groups of strictly supersolvable toric arrangements.
\end{abstract}

\maketitle

\section{Introduction}

In 1937, N. Jacobson introduced in \cite{Jac37} the notion of a \emph{restricted $\FF$-Lie algebra} for fields $\FF$ with positive characteristic. Soon after (cf. \cite{Zas39}), H. Zassenhaus  established a procedure to associate to any group $G$, an $\mathbb N$-graded, restricted $\FF_p$-Lie algebra $\grZ_\bullet(G)$, where $\FF_p$ denotes the finite field of cardinality $p$. In case $G$ is a pro-$p$ groups, the homogeneous components $\grZ_k(G)$ are compact $\FF_p$-vector spaces. Moreover, for a finitely generated, discrete group $G$ one has:
\begin{equation}\label{eq:grz.pro-p.completion}
    \grZ_\bullet(G) \cong \gr^{\mathcal Z}_\bullet(\hat G_{p})
\end{equation}
where $i_G : G \to \hat G_{p}$ denotes the pro-$p$ completion of $G$. From now on, for a pro-$p$ group $G$ we omit the ``$p$" 
in the notation of the restricted Zassenhaus $\FF_p$-Lie algebra $\gr^{\mathcal Z}_\bullet(G)$.
\noindent
 Almost simultaneously, Jennings gave a description of the homogeneous terms of this restricted $\FF_p$-Lie algebra in terms of combinatorial data of the group $G$, thus showing that it can be defined in a similar way as the Magnus $\Z$-Lie algebra $\grM_\bullet(G)$ established some years earlier (cf. \cite{Mag37} \cite{Mag40}). 
\noindent
In general, for a given group $G$ the Magnus $\Z$-Lie algebra is easier to compute, however some applications, like the May spectral sequence (see \cite{Day94} \cite{Leo24}), require knowing the isomorphism type of the restricted Zassenhaus $\FF_p$-Lie algebra $\grZ_\bullet(G)$ explicitly. 
\noindent
The main purpose of this note is to provide a description of the restricted Zassenhaus $\FF_p$-Lie algebra $\grZ_\bullet(G)$ in terms of the Magnus $\Z$-Lie algebra $\grM_\bullet(G)$. 
This description uses the restrictification functor 
\begin{equation}\label{eq:restrictification.functor}
    (-)^{[p]} : \FF_p\text{-}\catlie \longrightarrow \FF_p\text{-}\catReslie
\end{equation}
which is the left adjoint of the forgetful functor $f :\FF_p\text{-}\catReslie \longrightarrow\ \FF_p\text{-}\catlie$.
In this setting one obtains the following presentation of the restricted Zassenhaus $\FF_p$-Lie algebra of a group $G$.
\begin{thmA}
    Let $G$ be a group (resp. pro-$p$ group). There exists a natural surjective homomorphism of $\mathbb N$-graded, $1$-generated, restricted $\FF_p$-Lie algebras
    \begin{equation}\label{eq:pres}
        \hat\theta_{G, \bullet} : (\grM_\bullet(G)\otimes \FF_p)^{[p]} \twoheadrightarrow \grZ_\bullet(G) 
    \end{equation}
\end{thmA}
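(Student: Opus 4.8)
The plan is to construct $\hat\theta_{G,\bullet}$ by adjunction from an elementary comparison map, and then to deduce surjectivity from the fact that both sides are $1$-generated, together with a degree-$1$ computation.

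First I would recall that the Jennings filtration $\mathcal D_\bullet(G)$ defining $\grZ_\bullet(G)$ is a strongly central filtration with $\mathcal D_1(G)=G$ and $[\mathcal D_i(G),\mathcal D_j(G)]\subseteq\mathcal D_{i+j}(G)$, so that $\gamma_k(G)\subseteq\mathcal D_k(G)$ for every $k$, where $\gamma_\bullet(G)$ denotes the lower central series. The inclusions $\gamma_k(G)\subseteq\mathcal D_k(G)$ induce maps $\gamma_k(G)/\gamma_{k+1}(G)\to\mathcal D_k(G)/\mathcal D_{k+1}(G)$ compatible with the Lie brackets coming from group commutators, and these assemble into a natural homomorphism of $\mathbb N$-graded $\Z$-Lie algebras $\grM_\bullet(G)\to f(\grZ_\bullet(G))$. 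Since $\mathcal D_k(G)^p\subseteq\mathcal D_{pk}(G)\subseteq\mathcal D_{k+1}(G)$, the target is $\FF_p$-linear, so this map factors through a natural homomorphism $\theta_{G,\bullet}\colon\grM_\bullet(G)\otimes\FF_p\to f(\grZ_\bullet(G))$ of $\mathbb N$-graded $\FF_p$-Lie algebras. Applying the adjunction $(-)^{[p]}\dashv f$ --- and using that the restrictification functor and its unit respect $\mathbb N$-gradings, the $p$-operation multiplying degrees by $p$ --- produces the desired natural homomorphism $\hat\theta_{G,\bullet}\colon(\grM_\bullet(G)\otimes\FF_p)^{[p]}\to\grZ_\bullet(G)$ of $\mathbb N$-graded restricted $\FF_p$-Lie algebras. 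Note that $\theta_{G,\bullet}$ itself is typically \emph{not} surjective (its image misses the $p$-th powers in $\grZ_\bullet(G)$), so the surjectivity of $\hat\theta_{G,\bullet}$ genuinely requires the restrictification.

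Next I would check that both sides are $1$-generated. The Magnus Lie algebra $\grM_\bullet(G)$ is generated in degree $1$ as a $\Z$-Lie algebra, since $\gamma_{k+1}(G)=[\gamma_k(G),G]$; hence $\grM_\bullet(G)\otimes\FF_p$ is $1$-generated as an $\FF_p$-Lie algebra, and if a graded $\FF_p$-Lie algebra $L$ is generated in degree $1$ as a Lie algebra then $L^{[p]}$ is generated in degree $1$ as a restricted Lie algebra --- the restricted subalgebra generated by $L_1$ already contains all of $L$, hence all $p$-th powers, hence all of $L^{[p]}$. On the other side, $\grZ_\bullet(G)$ is $1$-generated as a restricted $\FF_p$-Lie algebra; this follows from Jennings' recursion $\mathcal D_n(G)=[\mathcal D_{n-1}(G),G]\cdot\mathcal D_{\lceil n/p\rceil}(G)^p$, or from the fact that its restricted enveloping algebra is the associated graded of $\FF_pG$ with respect to the augmentation ideal, which is generated in degree $1$.

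Finally, for surjectivity: the image of $\hat\theta_{G,\bullet}$ is a graded restricted $\FF_p$-subalgebra of $\grZ_\bullet(G)$, so since $\grZ_\bullet(G)$ is $1$-generated it suffices to show $\hat\theta_{G,1}$ is onto. As $\gamma_1(G)=\mathcal D_1(G)=G$ and $\mathcal D_2(G)=\gamma_2(G)G^p$, the map $\theta_{G,1}$ is the canonical isomorphism $(G/\gamma_2(G))\otimes\FF_p\xrightarrow{\ \sim\ }G/\mathcal D_2(G)$, which completes the discrete case. For a pro-$p$ group $G$ the argument applies verbatim, with all subgroups, subalgebras and images replaced by their closures and with topological generation throughout; alternatively, for topologically finitely generated $G$ one deduces it from the discrete case via \eqref{eq:grz.pro-p.completion}, the general case following by a cofiltered-limit argument. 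The only point requiring genuine care is the bookkeeping around the restrictification functor in the construction --- its compatibility with $\mathbb N$-gradings, with $1$-generation, and with functoriality in $G$ --- all the rest being standard commutator calculus or formal.
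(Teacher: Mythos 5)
Your proposal is correct and follows essentially the same route as the paper: the comparison map induced by the inclusion $\gamma_k(G)\leq D_k(G)$, reduction mod $p$, the adjunction $(-)^{[p]}\dashv f$ to obtain $\hat\theta_{G,\bullet}$, and surjectivity from the degree-$1$ isomorphism together with $1$-generation of $\grZ_\bullet(G)$ as a restricted Lie algebra. Your write-up in fact supplies some details the paper leaves implicit (the identification $D_2(G)=\gamma_2(G)G^p$ and the justification of $1$-generation via Jennings' recursion), but the argument is the same.
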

\noindent
In some particular cases the \emph{Magnus-Zassenhaus kernel} $\kernel(\hat\theta_{G, \bullet})$ can be described explicitly.
It is well known that for a pro-$p$ group $G$ of exponent $p$ the Magnus Lie algebra satisfies the $(p-1)$-Engel condition (see \cite{Kuh93}). This has the following consequence (see Section \S\ref{section 2.3}
and Subsection \S\ref{sss:unif}).

\begin{cor}
\label{cor1}
    If $G$ is a pro-$p$ group of exponent $p$, then $\gr^{\mathcal Z}_\bullet(G)$ is an $\mathbb N$-graded, restricted $\FF_p$-Lie algebra with zero $p$-map. In particular, $\kernel(\hat\theta_{G, \bullet})$ consists of the restricted $\FF_p$-Lie ideal of 
    $(\grM_\bullet(G)\otimes \FF_p)^{[p]}$  generated by the elements $x^{[p]}$ where $x \in \grM_\bullet(G)\otimes \FF_p$.
\end{cor}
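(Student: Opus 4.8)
The plan is to play the presentation $\hat\theta_{G,\bullet}$ of Theorem~A off against the explicit description of the $p$-operation on $\grZ_\bullet(G)$ and the $(p-1)$-Engel condition for the Magnus Lie algebra recalled above (see \cite{Kuh93}). Recall that the homogeneous components of $\grZ_\bullet(G)$ are the quotients of consecutive terms of the Zassenhaus filtration $G=D_1(G)\supseteq D_2(G)\supseteq\cdots$, and that its $p$-operation carries the class of $g$ in $\grZ_n(G)$ to the class of $g^{p}$ in $\grZ_{pn}(G)$. First I would settle the opening assertion: since $G$ has exponent $p$ we have $g^{p}=1$ for all $g\in G$, so this $p$-operation is identically zero, and hence $\grZ_\bullet(G)$ is an $\mathbb N$-graded restricted $\FF_p$-Lie algebra with zero $p$-map. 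Because $\hat\theta_{G,\bullet}$ is a homomorphism of restricted Lie algebras, $\hat\theta_{G,\bullet}(x^{[p]})=\hat\theta_{G,\bullet}(x)^{[p]}=0$ for every $x\in\grM_\bullet(G)\otimes\FF_p$, and since $\kernel(\hat\theta_{G,\bullet})$ is a restricted ideal it contains the restricted ideal $I$ generated by all these elements $x^{[p]}$.

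Next I would pin down $I$ intrinsically. By \cite{Kuh93} the Lie algebra $\grM_\bullet(G)\otimes\FF_p$ satisfies $\ad(z)^{p-1}=0$ for all $z$; hence, by Jacobson's criterion, the zero map is an admissible $p$-operation on it, since the inner derivations $\ad(z)^{p}$ and the Jacobson polynomials $s_i(x,y)$ entering the axioms are all built from $(p-1)$-fold iterates of $\ad$ and therefore vanish. Let $L_0$ denote $\grM_\bullet(G)\otimes\FF_p$ equipped with this zero $p$-map. By the adjunction defining $(-)^{[p]}$, the identity Lie homomorphism $\grM_\bullet(G)\otimes\FF_p\to f(L_0)$ extends uniquely to a restricted homomorphism $\pi:(\grM_\bullet(G)\otimes\FF_p)^{[p]}\twoheadrightarrow L_0$, and I claim $\kernel(\pi)=I$. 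The inclusion $I\subseteq\kernel(\pi)$ is immediate as in the previous paragraph. For the reverse inclusion it suffices to check that the composite $\grM_\bullet(G)\otimes\FF_p\to(\grM_\bullet(G)\otimes\FF_p)^{[p]}/I$ is surjective, for it then admits a left inverse induced by $\pi$ and is therefore an isomorphism; surjectivity follows because $\grM_\bullet(G)\otimes\FF_p$ is closed under the Lie bracket and all of its $p$-powers lie in $I$, so a short computation shows that the sum of (the image of) $\grM_\bullet(G)\otimes\FF_p$ and $I$ is already a restricted Lie subalgebra of $(\grM_\bullet(G)\otimes\FF_p)^{[p]}$, hence equal to all of it.

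It remains to identify $\hat\theta_{G,\bullet}$ with $\pi$. For a group $G$ of exponent $p$, Jennings's formula $D_n(G)=\prod_{i p^{j}\ge n}\gamma_i(G)^{p^{j}}$ collapses, every factor with $j\ge1$ being trivial, to $D_n(G)=\gamma_n(G)$ for all $n\ge1$; consequently $\grZ_n(G)=\gamma_n(G)/\gamma_{n+1}(G)=\grM_n(G)\otimes\FF_p$ with matching Lie brackets and, by the first paragraph, zero $p$-map, i.e.\ $\grZ_\bullet(G)=L_0$ as restricted $\FF_p$-Lie algebras. Under this identification the natural Lie homomorphism $\grM_\bullet(G)\otimes\FF_p\to\grZ_\bullet(G)$ induced by the inclusions $\gamma_n(G)\subseteq D_n(G)$ — the one whose restricted extension along the unit of the adjunction is $\hat\theta_{G,\bullet}$ — is the identity, so uniqueness in the adjunction gives $\hat\theta_{G,\bullet}=\pi$, whence $\kernel(\hat\theta_{G,\bullet})=\kernel(\pi)=I$. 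The main obstacle I anticipate is not conceptual but a matter of care: one must check that the zero $p$-operation is genuinely consistent with the Engel identity (this is exactly where the exponent $p-1$ rather than $p$ is needed), and, in order to invoke the uniqueness clause, one must extract from the construction behind Theorem~A that $\hat\theta_{G,\bullet}$ really is the restricted extension of the tautological map $\grM_\bullet(G)\otimes\FF_p\to\grZ_\bullet(G)$.
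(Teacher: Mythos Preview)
Your argument is correct and follows essentially the same route as the paper. The paper's proof is the combination of the sentence preceding the corollary (the Magnus Lie algebra is $(p-1)$-Engel, citing \cite{Kuh93}) with Section~\ref{section 2.3}, which records exactly your two key observations: that a $(p-1)$-Engel $\FF_p$-Lie algebra $\LL$ admits the zero $p$-map, and that the adjunction map $j_\LL^{[p]}\colon\LL^{[p]}\to\LL$ has kernel the restricted ideal generated by all $x^{[p]}$. You supply two details the paper leaves implicit, namely an explicit verification that $\ker(j_\LL^{[p]})$ equals that ideal (your surjectivity argument via $\LL+I$ being a restricted subalgebra), and the identification $\hat\theta_{G,\bullet}=j_\LL^{[p]}$ via Jennings's formula collapsing to $D_n(G)=\gamma_n(G)$; both are sound and round out the argument nicely.
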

\noindent
For a uniformly powerful pro-$p$ group $G$ one has $D_n(G) = G^{p^n}$ (cf. \cite{DDSMS}). In particular, $\gr_\bullet^{\mathcal Z}(G)$ is an $\mathbb N$-graded abelian restricted $\FF_p$-Lie algebra with injective $p$-map. From this, one concludes the following.
\begin{cor}
\label{cor2}
    If $G$ is a uniformly powerful pro-$p$ group, then $\gr^{\mathcal Z}_\bullet(G)$ is abelian. In particular,  $\kernel(\hat\theta_{G, \bullet})$ consists of the restricted $\FF_p$-Lie ideal of $(\grM_\bullet(G)\otimes \FF_p)^{[p]}$ generated by the elements of $\grM_{\geq 2}(G)\otimes \FF_p$. 
\end{cor}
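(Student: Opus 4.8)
The plan is to handle the two assertions separately. The first is already recorded in the discussion preceding the statement: for uniform $G$ the Zassenhaus filtration satisfies $D_n(G)=G^{p^n}$, so $\gr^{\mathcal Z}_\bullet(G)$ is an $\mathbb N$-graded abelian restricted $\FF_p$-Lie algebra with injective $p$-operation, and there is nothing further to prove. For the description of the Magnus--Zassenhaus kernel I would set $L := \grM_\bullet(G)\otimes\FF_p$, view $L$ inside $L^{[p]} := (\grM_\bullet(G)\otimes\FF_p)^{[p]}$ through the unit of the adjunction, let $I\trianglelefteq L^{[p]}$ be the restricted $\FF_p$-Lie ideal generated by $L_{\geq 2} = \grM_{\geq 2}(G)\otimes\FF_p$, and establish $\kernel(\hat\theta_{G,\bullet}) = I$ by proving the two inclusions.

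First I would check $I\subseteq\kernel(\hat\theta_{G,\bullet})$. The composite $L\hookrightarrow L^{[p]}\xrightarrow{\hat\theta_{G,\bullet}}\gr^{\mathcal Z}_\bullet(G)$ is a homomorphism of $\FF_p$-Lie algebras with abelian target, hence annihilates $[L,L]$; and since the associated graded Lie ring of a lower central series is generated in degree $1$ (because $\gamma_{n+1}(G)=[\gamma_n(G),G]$), the same holds for $L$, so $[L,L] = L_{\geq 2}$. Thus $L_{\geq 2}\subseteq\kernel(\hat\theta_{G,\bullet})$, and as the latter is a restricted ideal, $I\subseteq\kernel(\hat\theta_{G,\bullet})$. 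In particular the induced surjection $\bar\theta : L^{[p]}/I\twoheadrightarrow\gr^{\mathcal Z}_\bullet(G)$ is well defined, and it remains to see that $\bar\theta$ is injective.

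For that I would analyse $\bar L := L^{[p]}/I$ degree by degree. It is generated, as a restricted Lie algebra, by the image of $V := L_1 = \grM_1(G)\otimes\FF_p$; the bracket of two such generators lies in the image of $L_2\subseteq L_{\geq 2}$ and so vanishes, and a short centralizer argument (using that $\ad$ of a $p$-power is the $p$-th power of $\ad$) shows $\bar L$ is abelian. Over $\FF_p$ the $p$-operation of an abelian restricted Lie algebra is $\FF_p$-linear — the mixed terms in $(x+y)^{[p]}$ are brackets and $(\lambda x)^{[p]}=\lambda^p x^{[p]}=\lambda x^{[p]}$ — so $\bar L$ is concentrated in the degrees $p^k$, $k\geq 0$, with $\bar L_{p^k}$ the $\FF_p$-linear image of $\bar L_1$ under the $k$-fold $p$-operation; hence $\dim_{\FF_p}\bar L_{p^k}\leq d := \dim_{\FF_p}\grM_1(G)\otimes\FF_p$, which is finite as $G$ is finitely generated. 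The target $\gr^{\mathcal Z}_\bullet(G)$ is $1$-generated by Theorem A, abelian, and has injective $p$-operation, so by the same reasoning it too is concentrated in the degrees $p^k$, with $\gr^{\mathcal Z}_{p^k}(G)$ the injective linear image of $\gr^{\mathcal Z}_1(G)$, hence of dimension $\dim_{\FF_p}\gr^{\mathcal Z}_1(G)$. Since $\gr^{\mathcal Z}_1(G) = G/D_2(G)$ and $\grM_1(G)\otimes\FF_p = G/\gamma_2(G)G^p = G/D_2(G)$ are both of dimension $d$, and $I$ has no part in degree $1$, the component $\bar\theta_1 = \hat\theta_{G,1}$ is a surjection between $d$-dimensional spaces, hence an isomorphism, and $\dim_{\FF_p}\gr^{\mathcal Z}_{p^k}(G)=d$ for all $k$. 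In degrees $n\notin\{p^k\}$ both sides of $\bar\theta$ vanish, while in degree $p^k$ the map $\bar\theta_{p^k}$ is a surjection of $\FF_p$-spaces of dimension $\leq d$ onto one of dimension $d$, hence an isomorphism; so $\bar\theta$ is an isomorphism and $\kernel(\hat\theta_{G,\bullet}) = I$.

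The step I expect to cost the most care is the last one: pinning down that $\bar L$ is concentrated in degrees $p^k$ with $\dim_{\FF_p}\bar L_{p^k}\leq d$. This is exactly where the $\FF_p$-linearity of the $p$-operation on abelian restricted Lie algebras is used; without it the dimension count does not close. (A more hands-on substitute for the first inclusion would be to verify directly that $\gamma_n(G)\subseteq D_{n+1}(G)$ for all $n\geq 2$ when $G$ is uniform — so that the natural comparison $\grM_\bullet(G)\otimes\FF_p\to\gr^{\mathcal Z}_\bullet(G)$ vanishes above degree $1$ — but the identity $[L,L]=L_{\geq 2}$ makes that case analysis unnecessary.)
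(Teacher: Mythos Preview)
Your argument is correct. The paper does not spell out a proof of this corollary beyond the sentence preceding it (and the brief remarks in Subsection~\ref{sss:unif}): it simply records that for uniform $G$ one has $\gr^{\mathcal Z}_\bullet(G)$ abelian with injective $p$-map and asserts that the description of $\kernel(\hat\theta_{G,\bullet})$ follows. Your write-up is exactly the natural way to cash this out: the inclusion $I\subseteq\kernel(\hat\theta_{G,\bullet})$ from abelianness of the target together with $[L,L]=L_{\geq 2}$, and the reverse inclusion by the degree-by-degree dimension count, using that both $\bar L$ and $\gr^{\mathcal Z}_\bullet(G)$ are abelian, $1$-generated restricted $\FF_p$-Lie algebras (hence concentrated in degrees $p^k$) and that the $p$-map on the target is injective.

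One small remark that might streamline the second half: the quotient $L^{[p]}/I$ can be identified functorially with $(L/L_{\geq 2})^{[p]}=V^{[p]}$ via the adjunction \eqref{eq:adj.restr}, since restricted maps out of $L^{[p]}$ killing $I$ correspond to Lie maps out of $L$ killing $L_{\geq 2}$. Then the PBW description \eqref{eq:pbw.restrictification} applied to the abelian Lie algebra $V$ gives $\bar L_{p^k}\cong V$ directly, without the separate centralizer argument or the inequality $\dim\bar L_{p^k}\leq d$. This is not a different approach, just a slightly more packaged version of what you wrote.
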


A finitely generated discrete group (resp. pro-$p$ group) $G$ will be said to be \emph{$\gamma$-free} if $G/\gamma_k(G)$ is torsion-free for all $k \geq 1$, where $\gamma_k(G)$ denotes the (closure of the) $k$-th term of the lower central series of $G$.  

\begin{theoremB}
    Let $G$ be a f.g. $\gamma$-free group (resp. $\gamma$-free pro-$p$ group). Then, the homomorphism $\hat\theta_{G, \bullet}$ (cf. \eqref{eq:pres}) is an isomorphism.
\end{theoremB}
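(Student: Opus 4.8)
The plan is to lift $\hat\theta_{G,\bullet}$ to a morphism of restricted universal enveloping algebras, identify it with the natural map, and reduce its injectivity to a torsion-freeness statement over $\Z$ (resp. $\Z_p$) — which is precisely where the $\gamma$-free hypothesis is spent. Applying the restricted universal enveloping functor to $\hat\theta_{G,\bullet}$, and using the canonical identification $u(L^{[p]})\cong U(L)$ for a Lie $\FF_p$-algebra $L$ (the restricted subalgebra of $U(L)$ generated by $L$ is $L^{[p]}$, and it generates $U(L)$) together with Quillen's isomorphism $u(\grZ_\bullet(G))\cong\gr(\FF_p[[G]])$ with the associated graded of the completed group algebra for the augmentation filtration, one obtains a homomorphism of graded Hopf algebras $u(\hat\theta_{G,\bullet})\colon U(\grM_\bullet(G)\otimes\FF_p)\to\gr(\FF_p[[G]])$. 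Since the embedding of a restricted Lie algebra into its enveloping algebra is functorial, $\hat\theta_{G,\bullet}$ is a restriction of $u(\hat\theta_{G,\bullet})$, so injectivity of the latter forces injectivity of the former; as $\hat\theta_{G,\bullet}$ is already surjective by Theorem~A, it therefore suffices to prove that $u(\hat\theta_{G,\bullet})$ is an isomorphism.

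Both source and target of $u(\hat\theta_{G,\bullet})$ are generated in degree $1$, where the map is the canonical identification $\grM_1(G)\otimes\FF_p=G/D_2(G)=\grZ_1(G)$ (using $D_2(G)=\gamma_2(G)G^{p}$); hence $u(\hat\theta_{G,\bullet})$ is surjective and, being an algebra homomorphism agreeing in degree $1$ with it, coincides with the natural map $U(\grM_\bullet(G)\otimes\FF_p)\to\gr(\FF_p[[G]])$, which is the reduction mod $p$ of the natural homomorphism $\Theta_G\colon U_{\Z_p}(\grM_\bullet(G))\to\gr(\Z_p[[G]])$. It remains to show $\Theta_G$ is an isomorphism. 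It is surjective (degree-$1$ generation again), and $\Theta_G\otimes\mathbb Q_p$ is an isomorphism by Quillen's characteristic-zero theorem, so $\kernel(\Theta_G)$ is $\Z_p$-torsion; but $\gamma$-freeness means each $\gamma_k(G)/\gamma_{k+1}(G)$, and hence $\grM_\bullet(G)$, is torsion-free, thus flat over $\Z_p$, so by the Poincar\'e--Birkhoff--Witt theorem $U_{\Z_p}(\grM_\bullet(G))$ is torsion-free and $\kernel(\Theta_G)=0$. Consequently $\Theta_G$ is an isomorphism onto the torsion-free graded ring $\gr(\Z_p[[G]])$; in particular all layers of the augmentation filtration are torsion-free, so reduction mod $p$ is exact on them and commutes with $\gr$, whence $u(\hat\theta_{G,\bullet})=\Theta_G\otimes\FF_p$ is an isomorphism. (For a discrete finitely generated $\gamma$-free $G$ one argues identically with $\Z[G]$, $\mathbb Q[G]$, $\FF_p[G]$, or reduces to the pro-$p$ case via \eqref{eq:grz.pro-p.completion}, noting $\hat G_p$ is again $\gamma$-free with unchanged $\grM_\bullet(G)\otimes\FF_p$ and that $\hat\theta_{\hat G_p,\bullet}$ corresponds to $\hat\theta_{G,\bullet}$.)

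The main obstacle is the injectivity of $\Theta_G$, which is really the one place the hypothesis is used: torsion-freeness of the integral (pro-$p$) enveloping algebra $U_{\Z_p}(\grM_\bullet(G))$ is what stops the natural surjection onto $\gr(\Z_p[[G]])$ from collapsing. Everything else is formal or classical — the identification $u(L^{[p]})\cong U(L)$ provided by the restrictification functor \eqref{eq:restrictification.functor}, Quillen's theorems over $\FF_p$ and over $\mathbb Q_p$, and Poincar\'e--Birkhoff--Witt over a principal ideal domain for a flat Lie algebra — save for the routine check that $u(\hat\theta_{G,\bullet})$ is the base change of $\Theta_G$, which holds because both are graded algebra homomorphisms agreeing on the degree-$1$ generators. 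A more hands-on alternative, avoiding enveloping algebras, would be to compute $\dim_{\FF_p}\grZ_n(G)$ directly from $D_n(G)=\prod_{ip^j\ge n}\overline{\gamma_i(G)^{p^j}}$: working in Mal'cev-type coordinates adapted to the lower central series, torsion-freeness forces $D_n(G)/D_{n+1}(G)$ to have as a basis the classes of the $p^j$-th powers of basis elements of $\grM_i(G)$ with $ip^j=n$, so that $\dim_{\FF_p}\grZ_n(G)=\sum_{ip^j=n}\operatorname{rank}\grM_i(G)=\dim_{\FF_p}\big((\grM_\bullet(G)\otimes\FF_p)^{[p]}\big)_n$; combined with the surjectivity from Theorem~A this again gives that $\hat\theta_{G,\bullet}$ is an isomorphism.
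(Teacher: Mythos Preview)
Your principal route through restricted enveloping algebras and Quillen's theorems is genuinely different from the paper's and is essentially correct. The paper instead executes precisely your ``hands-on alternative'': after passing to a nilpotent quotient $G/\gamma_{c+1}(G)$, it fixes $\Z$- (resp.\ $\Z_p$-) bases $\{x_{i,\lambda}\gamma_{i+1}(G)\}$ of each $\grM_i(G)$, introduces a refined Jennings filtration $D_{n,k}(G)=\prod_{i\ge k,\,ip^j\ge n}\gamma_i(G)^{p^j}$, and uses the Hall--Petrescu identities to show that the Mal'cev coordinates of any element of $D_{n+1}(G)$ are divisible by the appropriate powers of $p$; this forces any element of $\ker\hat\theta_n$, written in the restricted-PBW basis $\{(x_{i,\lambda}\gamma_{i+1}(G))^{[p]^j}:ip^j=n\}$, to vanish. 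Your approach is more conceptual and situates the result cleanly in the Quillen framework, while the paper's is entirely self-contained, needing only Hall--Petrescu and restricted PBW rather than the two Quillen theorems and the intermediary $\Theta_G$.

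One point in your main argument wants care: in the pro-$p$ case you invoke ``Quillen's characteristic-zero theorem'' for $\Theta_G\otimes\mathbb Q_p$, but Quillen's theorem as usually stated concerns the ordinary group ring $\mathbb Q[G]$ of a discrete group, not $\gr(\Z_p[[G]])\otimes\mathbb Q_p$ for a pro-$p$ group. The simplest fix is to reduce first to the torsion-free nilpotent quotients (as the paper does), pick a dense discrete $\gamma$-free subgroup via a Mal'cev basis, and run your argument over $\Z$ there; the passage back is then the routine identification you already note. Similarly, the assertion that torsion-freeness of the graded pieces $I^n/I^{n+1}$ makes reduction mod $p$ commute with $\gr$ deserves the short inductive check that each $\Z_p[[G]]/I^n$ is itself torsion-free over $\Z_p$ (extensions of finitely generated torsion-free $\Z_p$-modules being torsion-free), so that $I^n\cap p\Z_p[[G]]=pI^n$ and hence $\bar I^n/\bar I^{n+1}\cong (I^n/I^{n+1})\otimes\FF_p$.
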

\noindent
Theorem B applies in many significant cases. 
\begin{itemize}
    \item Right-angled Artin groups; \cite{BHS20} \cite{PS06}
    \item certain types of one-relator (pro-$p$) groups including surface groups; \cite{Lab70} 
    \item almost-direct products of $\gamma$-free groups; \cite{FR85} 
    \item Magnus groups; \cite{Bau67}, \cite{Shm70}
    \item pure braid groups; \cite{Koh85} 
    \item more generally, fundamental groups of supersolvable complex hyperplane arrangement \cite{FR85} and strictly supersolvable toric arrangements \cite{BD24}
\end{itemize}

Theorem B states that all finitely generated $\gamma$-free pro-$p$ groups have trivial Magnus-Zassenhaus kernel $\kernel(\hat{\theta}_{G,\bullet})$. However, the class of 
finitely generated pro-$p$ groups with trivial Magnus-Zassenhaus kernel $\kernel(\hat{\theta}_{G,\bullet})$ is strictly larger than the class of finitely generated $\gamma$-free pro-$p$ groups. (cf. Subsection~\ref{sss:mild}).

\section{Lie algebras}
\label{s:Liealg}
In this section we recall some basic constructions from the theory of Lie algebras and restricted Lie algebras.

\subsection{Universal envelopes and Hopf algebras}
\label{ss:UniHopf}
Let $\FF$ be a field. We will denote by $\FF\text{-}\catAlg$ the category of associative, unital $\FF$-algebras with $\FF$-algebra homomorphisms and similarly by $\FF\text{-}\catlie$ the category of $\FF$-Lie algebras with $\FF$-Lie algebra homomorphisms. These two categories are related by a well known pair of adjoint functors:
\begin{equation}\begin{tikzcd}
    \FF\text{-}\catlie \arrow[rr, yshift=3pt, bend left=10pt, "\Univ"] &  {\bot} & \FF\text{-}\catAlg \arrow[ll, yshift=-3pt, bend left=10pt, "\mathcal L" below]
\end{tikzcd}\end{equation}
The right adjoint assigns to each associative $\FF$-algebra $(A,\, \cdot)$ the $\FF$-Lie algebra ${\mathcal L}(A) = (A,\, [\text{-},\text{-}]_A)$ whose bracket is given by $[a, b]_A := a\cdot b - b \cdot a$ for all $a, b \in A$. The left adjoint assigns to each $\FF$-Lie algebra $\LL$ its \emph{universal enveloping algebra} $\Univ(\LL)$,
which can be presented as a quotient of $T^\bullet(\LL)$, the free tensor $\FF$-algebra over $\LL$, modulo the ideal $J_\LL = \langle x \otimes y - y \otimes x - [x, y] \mid x, y \in \LL \rangle$. The classical Poincar\'e-Birkoff-Witt theorem (commonly referred to as the ``PBW theorem'') states that natural map, $\iota_\LL : \LL \to \Univ(\LL)$, is an injective morphism of $\FF$-vector spaces and that $\Univ(\LL)$ is generated as an $\FF$-algebra by the image of $\iota_\LL$. In order to simplify notation we will identify a Lie algebra $\LL$ with its image $\iota_\LL(\LL)$ in its universal enveloping algebra $\Univ(\LL)$.

The universal enveloping algebra $\Univ(\LL)$ inherits from $T^\bullet(\LL)$ the structure of an $\FF$-Hopf algebra, thus the functor $\Univ$ actually takes values in the category $\FF\text{-}\catHopf$. 
Let $H$ be a Hopf algebra over $\FF$, an element $x \in H$ is said to be \emph{primitive} if it satisfies the identity $\Delta(x) = x \otimes 1 + 1 \otimes x$ where $\Delta : H \to H \otimes H$ is the coproduct of $H$. One verifies easily that the $\FF$-subspace $\Prim(H)$ generated by primitive elements is closed under the commutator bracket $[x, y]_H := xy - yx$ and thus is an $\FF$-Lie subalgebra of $H$. The assignment $H \mapsto \Prim(H)$ extends to a functor $\Prim : \FF\text-\catHopf \longrightarrow \FF\text-\catlie$ that is right adjoint to the universal enveloping (Hopf) algebra functor:
\begin{equation}\begin{tikzcd}\label{eq:adj.hopf}
    \FF\text{-}\catlie \arrow[rr, yshift=3pt, bend left=10pt, "\Univ"] & {\bot} & \FF\text{-}\catHopf \arrow[ll, yshift=-3pt, bend left=10pt, "\Prim" below] \mathclap{\quad \quad \quad}
\end{tikzcd}\end{equation}

An $\mathbb N$-\emph{graded} Lie algebra, where $\mathbb N$ is the set of positive integers, is a Lie algebra $\LL_\bullet$ equipped with a direct sum decomposition $\LL_\bullet = \bigoplus_{n \geq 1} \LL_n$ compatible with the Lie bracket, i.e. for all $n, m \in \mathbb N$ one has $[\LL_n, \LL_m] \subseteq \LL_{n+m}$.
\noindent
The free tensor algebra of an $\mathbb N$-graded Lie algebra $\LL_\bullet$ (or more generally for an $\mathbb N$-graded vector space) is canonically $\mathbb N_0$-bigraded:
\begin{equation}
	T^{n, k}(\LL_\bullet) = \bigoplus_{i_1 + i_2 + \cdots + i_k = n} \LL_{i_1} \otimes \LL_{i_2} \otimes \cdots \otimes \LL_{i_k}
\end{equation}
where $\mathbb N_0 = \mathbb N\cup \{0\}$ and one puts $T^{0, 0}(\LL_\bullet) = \FF \cdot {\rm id}$ and $T^{n, 0}(\LL_\bullet) = 0$ for $n \geq 1$. Note that the ideal $J_{\LL_\bullet}$ is a graded ideal with respect to the first grading but not the second. Thus, the universal enveloping algebra $\Univ(\LL_\bullet)$ inherits a grading as well.

\subsection{Restricted Lie algebras and restrictification}

Let $\FF$ be a field of characteristic $p > 0$
\begin{defn}\label{def:restricted.lie.alg}
    Let $\LL$ be an $\FF$-Lie algebra $\LL$. A \emph{$p$-map} for $\LL$ is a function $(-)^{[p]} : \LL \to \LL$ satisfying:
    \begin{enumerate}
        \item[($\rm rL_1$)] $(\lambda x)^{[p]} = \lambda^p x^{[p]}$;
        \item[($\rm rL_2$)] $(x + y)^{[p]} = x^{[p]} + y^{[p]} + \sum_{k=1}^{p-1} \frac{s_k(x, y)}{k}$;
        \item[($\rm rL_3$)] $[x^{[p]}, y] = \ad(x)^p(y)$;
    \end{enumerate}
    where $s_k(x, y)$ is the coefficient of $t^{k-1}$ in the formal expression $\ad(tx+y)^{p-1}(x)$ and $\ad(x)(y) = [x, y]$. An $\FF$-Lie algebra equipped with a $p$-map is said to be a \emph{restricted} $\FF$-Lie algebra.
\end{defn}

By the PBW theorem, $\LL$ 
is an $\FF$-Lie subalgebra of $(\Univ(\LL), [\text{-},\text{-}]_{\Univ(\LL)})$. Since $\Prim$ is the right adjoint of $\Univ$, $\LL$ is also an $\FF$-Lie subalgebra of $\Prim(\Univ(\LL))$. It is well know that for fields of characteristic $0$ the two Lie algebras $\LL$ and $\Prim(\Univ(\LL))$ coincide (see \cite{Bou89} \cite{MM65}), while for fields of characteristic $p > 0$, $\Prim(\Univ(\LL))$ is strictly larger than $\LL$.
In this case $\Prim(\Univ(\LL))$ is closed with respect to the $p$-power map $x \mapsto x^p$ defined using the multiplication in $\Univ(\LL)$ and thus \begin{equation}
    \LL^{[p]} = \Prim(\Univ(\LL))
\end{equation}
is a restricted Lie algebra which will be called the  \emph{canonical restrictification of $\LL$}.
\noindent
Restricted Lie algebras together with Lie algebra morphisms which respect the $p$-power maps form a category, $\FF\text{-}\catReslie$, and the assignment $\LL \mapsto \LL^{[p]}$ extends to a covariant functor $(-)^{[p]} : \FF\text{-}\catlie \to \FF\text{-}\catReslie$ which is left adjoint to the obvious forgetful functor from $\FF\text{-}\catReslie$ to $\FF\text{-}\catlie$.

Let $(A, \cdot)$ be an associative $\FF$-algebra, then the Lie algebra ${\mathcal L}(A)$ equipped with the map $x \mapsto x^p$, defined using the associative product of $A$ is a restricted Lie algebra. As in the case of Lie algebras, this defines a functor ${\mathcal L}_p : \FF\text-\catAlg \to \FF\text-\catReslie$. As in the general case, every restricted $\FF$-Lie algebra $\LL$ has a restricted universal enveloping algebra $\rUniv(\LL)$ which can be either defined as the quotient of $\Univ(\LL)$ by the ideal generated by $x^{[p]}-x^p$ or as the left adjoint of the functor ${\mathcal L}_p$ (see \cite{Jac37}).
The just mentioned properties can be summarized as follows.
\begin{prop}\label{prop:adj.restrictification}
    	The following functors form adjoint pairs:
	\begin{equation}\begin{tikzcd}
        \FF\text-\catReslie \arrow[rr, yshift=3pt, bend left=10pt, "\rUniv"] & {\bot} & \FF\text-\catAlg \arrow[ll, yshift=-3pt, bend left=10pt, "{\mathcal L}_p" below]
    \end{tikzcd}\end{equation}

	\begin{equation}\label{eq:adj.restr}\begin{tikzcd}
        \FF\text-\catlie \arrow[rr, yshift=3pt, bend left=10pt, "(-)^{[p]}" xshift=4pt] & {\bot} & \FF\text-\catReslie \arrow[ll, yshift=-3pt, bend left=10pt, "\rm f" below]
    \end{tikzcd}\end{equation}
	where $\rm f$ is the forgetful functor.
\end{prop}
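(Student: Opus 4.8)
The plan is to establish each of the two adjunctions by exhibiting its unit and checking the corresponding universal property, reducing everything to the tensor-algebra descriptions recalled in \S\ref{ss:UniHopf}. For $\rUniv \dashv \mathcal{L}_p$ I would define, for a restricted $\FF$-Lie algebra $\mathfrak g$, the algebra $\rUniv(\mathfrak g)$ as the quotient of $T^\bullet(\mathfrak g)$ (equivalently of $\Univ(\mathrm f(\mathfrak g))$) by the two-sided ideal generated by the Lie relations $x\otimes y-y\otimes x-[x,y]$ together with the restriction relations $x^{\otimes p}-x^{[p]}$, $x\in\mathfrak g$; let $\eta_{\mathfrak g}\colon\mathfrak g\to\mathcal{L}_p(\rUniv(\mathfrak g))$ be the canonical map. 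The first point to verify is that $\eta_{\mathfrak g}$ is a morphism of restricted Lie algebras: $\FF$-linearity and the bracket identity are inherited from $\Univ(\mathrm f(\mathfrak g))$, while compatibility with the $p$-maps is exactly the relation $x^{\otimes p}=x^{[p]}$. Then, given an associative unital $\FF$-algebra $A$ and a restricted homomorphism $\phi\colon\mathfrak g\to\mathcal{L}_p(A)$, the universal property of $T^\bullet(\mathfrak g)$ produces an algebra map that annihilates the Lie relations (because $\phi$ is a Lie homomorphism) and the relations $x^{\otimes p}-x^{[p]}$ (because $\phi$ commutes with the $p$-maps and $x^p$ is the $p$-power of $\mathcal{L}_p(A)$), hence factors through $\rUniv(\mathfrak g)\to A$; the factorization is unique since $\mathfrak g$ generates $\rUniv(\mathfrak g)$ as an algebra. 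This yields the natural bijection $\operatorname{Hom}_{\FF\text{-}\catAlg}(\rUniv(\mathfrak g),A)\cong\operatorname{Hom}_{\FF\text{-}\catReslie}(\mathfrak g,\mathcal{L}_p(A))$. By the restricted PBW theorem (\cite{Jac37}) the map $\eta_{\mathfrak g}$ is injective, so $\mathfrak g$ is a restricted Lie subalgebra of $\mathcal{L}_p(\rUniv(\mathfrak g))$, a fact needed below.

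For $(-)^{[p]}\dashv\mathrm f$ the functor $(-)^{[p]}$ is $\Prim\circ\Univ$ on objects and morphisms (note that $\Univ(\rho)$ is a homomorphism of $\FF$-Hopf algebras preserving $p$-th powers, so $\Prim(\Univ(\rho))$ is a morphism of restricted Lie algebras). The unit at a Lie algebra $\LL$ is the inclusion $\iota_\LL\colon\LL\hookrightarrow\Prim(\Univ(\LL))=\mathrm f(\LL^{[p]})$, a Lie homomorphism by the PBW theorem. I would show that for every restricted Lie algebra $\mathfrak g$ the restriction map
\[
\iota_\LL^{*}\colon \operatorname{Hom}_{\FF\text{-}\catReslie}(\LL^{[p]},\mathfrak g)\longrightarrow \operatorname{Hom}_{\FF\text{-}\catlie}(\LL,\mathrm f(\mathfrak g))
\]
is a bijection. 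Injectivity holds because $\LL^{[p]}=\Prim(\Univ(\LL))$ is generated as a restricted Lie algebra by $\iota_\LL(\LL)$, so two restricted homomorphisms out of $\LL^{[p]}$ that agree on $\LL$ coincide. For surjectivity, given a Lie homomorphism $g\colon\LL\to\mathrm f(\mathfrak g)$, compose it with the inclusion $\mathrm f(\mathfrak g)\hookrightarrow\mathcal{L}(\rUniv(\mathfrak g))$ and apply the classical adjunction $\Univ\dashv\mathcal{L}$ to obtain an algebra homomorphism $\Univ(\LL)\to\rUniv(\mathfrak g)$; the induced restricted homomorphism $\mathcal{L}_p(\Univ(\LL))\to\mathcal{L}_p(\rUniv(\mathfrak g))$ restricts to a restricted homomorphism $\psi\colon\LL^{[p]}\to\mathcal{L}_p(\rUniv(\mathfrak g))$, since $\Prim(\Univ(\LL))$ is a restricted subalgebra of $\mathcal{L}_p(\Univ(\LL))$. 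By construction $\psi$ agrees with $g$ on the generators $\iota_\LL(\LL)$, whose $\psi$-images lie in the restricted subalgebra $\mathfrak g$; hence $\psi(\LL^{[p]})\subseteq\mathfrak g$, and corestricting gives a preimage of $g$. Naturality in $\LL$ and in $\mathfrak g$ of $\iota^{*}$ and of this inverse construction is routine.

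The categorical skeleton above is straightforward, and the single non-formal ingredient — the point I expect to need most care — is the restricted PBW theorem in its two guises: that $\eta_{\mathfrak g}\colon\mathfrak g\to\rUniv(\mathfrak g)$ is injective, and that $\Prim(\Univ(\LL))$ is generated as a restricted Lie algebra by $\iota_\LL(\LL)$ (equivalently, that the canonical surjection $\rUniv(\Prim\Univ(\LL))\to\Univ(\LL)$ is an isomorphism). Both are classical results of Jacobson (\cite{Jac37}). One also uses, when verifying that the relation ideal defining $\rUniv(\mathfrak g)$ is a Hopf ideal — so that $\rUniv(\mathfrak g)$ is again an $\FF$-Hopf algebra with $\mathfrak g$ among its primitives, as is implicit in the discussion preceding the Proposition — Jacobson's identity $(x+y)^p=x^p+y^p+\sum_{k=1}^{p-1}s_k(x,y)/k$ valid in any associative $\FF$-algebra of characteristic $p$, matched against axiom $(\mathrm{rL}_2)$ of Definition~\ref{def:restricted.lie.alg}. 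Granting these, the remaining work is purely the bookkeeping of assembling the three interlocking adjunctions $\Univ\dashv\mathcal{L}$, $\rUniv\dashv\mathcal{L}_p$ and $(-)^{[p]}\dashv\mathrm f$ consistently.
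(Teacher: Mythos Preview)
The paper does not actually supply a proof of this proposition: it is stated as a summary of the preceding discussion, with the constructions of $\rUniv$ and $(-)^{[p]}=\Prim\circ\Univ$ having been described in the text and the adjunction properties left as classical facts (with a pointer to \cite{Jac37}). There is therefore no ``paper's approach'' to compare against.

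Your argument is correct and is the standard way one would fill in these details. A few remarks. First, your identification of the one genuinely nontrivial point is exactly right: with the paper's \emph{definition} $\LL^{[p]}:=\Prim(\Univ(\LL))$, the adjunction $(-)^{[p]}\dashv\mathrm f$ hinges on the fact that $\Prim(\Univ(\LL))$ is generated as a restricted Lie algebra by $\iota_\LL(\LL)$ (equivalently, that the $p$-envelope of $\LL$ inside $\mathcal L_p(\Univ(\LL))$ already exhausts all primitives). You invoke this both for injectivity and for the corestriction step in surjectivity, and you correctly flag it as a PBW-type input. The attribution to Jacobson is reasonable, though the statement in the form ``primitives $=$ $p$-envelope'' is perhaps more directly read off from Milnor--Moore \cite{MM65}; either reference suffices. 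Second, in your surjectivity argument you could shortcut slightly: once you have the algebra map $\Univ(\LL)\to\rUniv(\mathfrak g)$, it is automatically a Hopf algebra map (both sides are primitively generated by the images of $\LL$ and $\mathfrak g$ respectively), so it sends primitives to primitives, landing directly in $\Prim(\rUniv(\mathfrak g))$; the restricted PBW theorem then identifies the latter with $\mathfrak g$. This avoids the separate ``image of generators lies in $\mathfrak g$, hence so does everything'' step, though your version is equally valid.
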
 

\noindent
If $\LL_\bullet$ is a graded Lie algebra, then its restrictification $\LL^{[p]}$ admits a special grading. Consider the following subspaces of $\LL^{[p]}$ 
\begin{equation}\label{eq:pbw.restrictification}
	\LL^{[p]}_n = \bigoplus_{\substack{i, j \geq 0 \\ ip^j = n}} (\LL_i)^{[p]^j}
\end{equation}
where $(\LL_i)^{[p]^j}$ denotes the subspace of $\LL^{[p]}$ spanned by elements of the form $a_i^{[p]^j}$ with $a \in \LL_i$. From the PBW theorem we deduce that the subspaces $\LL^{[p]}_n$ are all disjoint. Moreover since $\LL^{[p]}$ is spanned by the elements of $\LL$ and the images of the $p$-power map, we have that $\LL^{[p]} = \bigoplus_{n \geq 0} \LL_n^{[p]}$.
\noindent
We record the following observations.

\begin{prop}
    For an $\FF$-Lie algebra $\LL$ where $\FF$ is a field of positive characteristic $p$, one has $\rUniv(\LL^{[p]}) \cong \Univ(\LL)$
\end{prop}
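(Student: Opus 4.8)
The plan is to obtain the isomorphism by appealing to the uniqueness of left adjoints, using the adjunctions collected in Proposition~\ref{prop:adj.restrictification}. Composing the adjunction $(-)^{[p]} \dashv \mathrm{f}$ with the adjunction $\rUniv \dashv \mathcal{L}_p$ shows that $\rUniv \circ (-)^{[p]} : \FF\text{-}\catlie \to \FF\text{-}\catAlg$ is left adjoint to $\mathrm{f} \circ \mathcal{L}_p : \FF\text{-}\catAlg \to \FF\text{-}\catlie$. The key (and essentially only) observation is that $\mathrm{f} \circ \mathcal{L}_p = \mathcal{L}$: the functor $\mathcal{L}_p$ equips the commutator Lie algebra $\mathcal{L}(A)$ of an associative $\FF$-algebra $A$ with the $p$-operation $x \mapsto x^p$ coming from the multiplication of $A$, and $\mathrm{f}$ then simply discards this operation, returning $\mathcal{L}(A)$; the same holds on morphisms. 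Hence $\rUniv \circ (-)^{[p]}$ and $\Univ$ are both left adjoint to $\mathcal{L}$, so they are naturally isomorphic; in particular $\rUniv(\LL^{[p]}) \cong \Univ(\LL)$, and the isomorphism is natural in $\LL$.

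Should a concrete description of the isomorphism be wanted, I would build it from the two relevant universal properties. First, the inclusion $\LL^{[p]} = \Prim(\Univ(\LL)) \hookrightarrow \mathcal{L}_p(\Univ(\LL))$ is a morphism of restricted $\FF$-Lie algebras, because the subspace of primitive elements is closed under taking $p$-th powers in $\Univ(\LL)$; by the adjunction $\rUniv \dashv \mathcal{L}_p$ it extends uniquely to an $\FF$-algebra homomorphism $\Phi : \rUniv(\LL^{[p]}) \to \Univ(\LL)$. Second, composing the Lie algebra inclusion $\LL \hookrightarrow \LL^{[p]}$ with the canonical map $\LL^{[p]} \to \rUniv(\LL^{[p]})$ yields an $\FF$-Lie algebra homomorphism $\LL \to \mathcal{L}(\rUniv(\LL^{[p]}))$, which by the adjunction $\Univ \dashv \mathcal{L}$ extends uniquely to an $\FF$-algebra homomorphism $\Psi : \Univ(\LL) \to \rUniv(\LL^{[p]})$.

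It then remains to verify that $\Phi$ and $\Psi$ are mutually inverse. Both composites restrict to the identity on the image of $\LL$, directly from the constructions. Since $\Univ(\LL)$ is generated as an $\FF$-algebra by $\LL$, this gives $\Phi \circ \Psi = \mathrm{id}$ at once. For $\Psi \circ \Phi = \mathrm{id}$ one needs that $\rUniv(\LL^{[p]})$ is itself generated as an $\FF$-algebra by the image of $\LL$: indeed $\LL^{[p]}$ is spanned by $\LL$ together with the iterated $p$-map images of its elements, and in $\rUniv(\LL^{[p]})$ the $p$-map is realised by the associative $p$-th power, so the image of $\LL^{[p]}$---hence all of $\rUniv(\LL^{[p]})$---lies in the subalgebra generated by the image of $\LL$. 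The point requiring the most care is exactly this generation statement, together with checking that each auxiliary map is a morphism in the appropriate category (of $\FF$-Lie algebras, resp.\ of restricted $\FF$-Lie algebras) so that the universal properties genuinely apply; once that is settled, naturality in $\LL$ is immediate.
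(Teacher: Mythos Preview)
Your proposal is correct, and the adjunction argument in your first paragraph is a genuinely different route from the paper's. The paper argues directly via universal properties: it claims that $\Univ(\LL)$ together with the inclusion $\LL^{[p]}\hookrightarrow\Univ(\LL)$ satisfies the universal property of the ordinary enveloping algebra $\Univ(\LL^{[p]})$, and then observes that the ideal $\langle x^{[p]}-x^p\rangle$ one quotients by to pass to $\rUniv(\LL^{[p]})$ is already zero in $\Univ(\LL)$. Your approach instead packages everything into the single observation $\mathrm{f}\circ\mathcal{L}_p=\mathcal{L}$ and invokes uniqueness of left adjoints; this gives the result with no computation, delivers naturality for free, and in fact sidesteps a delicate point in the paper's argument. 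Indeed, the intermediate claim $\Univ(\LL^{[p]})\cong\Univ(\LL)$ is not literally true (for $\LL$ one-dimensional abelian, $\LL^{[p]}=\Prim(\FF[x])$ has basis $\{x^{p^k}:k\geq 0\}$, so its ordinary enveloping algebra is a polynomial ring in infinitely many variables, not $\FF[x]$); what is true and sufficient is that $\Univ(\LL)$ satisfies the universal property of the \emph{restricted} enveloping algebra $\rUniv(\LL^{[p]})$, and that is essentially your concrete construction of $\Phi$ and $\Psi$ in the second half. The spanning statement for $\LL^{[p]}$ that you flag as the point needing care is exactly the input the paper records just before the proposition.
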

\begin{proof}  
    Recall that $\LL^{[p]} = \Prim(\Univ(\LL))$, so $\LL^{[p]}$ is a Lie subalgebra of the universal enveloping algebra $\Univ(\LL)$. It can be seen that $\Univ(\LL)$ along with the inclusion $\iota_{\LL^{[p]}} : \LL^{[p]} \hookrightarrow \Univ(\LL)$ satisfies the universal property of the universal enveloping algebra, therefore by uniqueness we have an isomorphism $\Univ(\LL^{[p]}) \cong \Univ(\LL)$. Now, since the $p$-map of $\LL^{[p]}$ is defined in terms of the algebra multiplication of $\Univ(\LL)$, the ideal of $\Univ(\LL^{[p]}) = \Univ(\LL)$ generated by elements of the form $x^{[p]} - x^p$ with $x \in \LL^{[p]}$ is the zero ideal, so $\rUniv(\LL^{[p]}) =  \Univ(\LL^{[p]})$. Thus we have proven $\rUniv(\LL^{[p]}) \cong \Univ(\LL)$.
\end{proof}

\subsection{$\mathbb N$-graded restricted $\FF_p$-Lie algebras with zero $p$-map}\label{section 2.3}
Let $(\LL, [\cdot,\cdot], (-)^{[p]})$ be a restricted $\FF_p$-Lie algebra such that $x^{[p]} = 0$ for all $x \in \LL$. By Definition \ref{def:restricted.lie.alg} (${\rm rL}_2$) one has that $s_k(x, y) = 0$ for all $1 \leq k \leq p-1$, in particular $\LL$ satisfies the $(p-1)$-Engel condition, that means ${\rm ad}^{p-1}(x) = 0$ for all $x \in \LL$. Conversely, if $\LL$ is an $\mathbb N$-graded $\FF_p$-Lie algebra satisfying the $(p-1)$-Engel condition, then $s_k(x, y) = 0$ for all $1 \leq k \leq p-1$, in particular together with the zero map $\LL$ constitutes a restricted $\FF_p$-Lie algebra.
\noindent
By \eqref{eq:adj.restr} there exists a homomorphism of restricted $\FF_p$-Lie algebras $j_\LL^{[p]} : \LL^{[p]} \to \LL$ corresponding to the identity ${\rm id}_\LL$ under the adjunction. The kernel of $j_\LL^{[p]}$ is the restricted $\FF_p$-Lie ideal generated by $x^{[p]}$ for all $x \in \LL$. 

\section{Subgroup series and associated graded Lie algebras}

In this section we study the Lie algebras associated to two important descending series of subgroups for a group $G$.

\subsection{The lower central series}
The \emph{lower central series} of a group $G$ is the series of characteristic subgroups $\{\gamma_i(G)\}_{i \geq 1}$ of $G$ defined recursively by
\begin{align*}
    \gamma_1(G) &= G \\
    \gamma_{i+1}(G) &= [G, \gamma_i(G)], \quad \text{ for  $i > 1$.} 
\end{align*}
\noindent
The lower central series is an example of a (descending) strongly central series, 
i.e. it satisfies $\gamma_{i+1}(G) \leq \gamma_i(G)$ for all $i \geq 1$ and $[\gamma_i(G), \gamma_j(G)] \leq \gamma_{i+j}(G)$ for all $i, j \geq 1$. To this series one may associate the graded $\Z$-Lie algebra: 
\[
	\grM_\bullet(G) = \bigoplus_{n \geq 1} \grM_n(G)
\]
where $\grM_n(G) := \gamma_n(G)/\gamma_{n+1}(G)$ which is also called the \emph{Magnus $\Z$-Lie algebra} associated to $G$. For $i, j \geq 1$, for $x \in \gamma_i(G)$ and $y \in \gamma_j(G)$
the Lie bracket:
\begin{align*}
	[-, -] : \grM_i(G) \otimes_\Z \grM_j(G) &\to \grM_{i+j}(G)  \quad \intertext{is defined by} 
	[x\gamma_{i+1}(G), \, y \gamma_{j+1}(G)] &= [x, y]\gamma_{i+j+1}(G).
\end{align*}
\noindent
From the definition one deduces easily that it is $\Z$-bilinear and alternating. The Jacobi identity can be derived from the well-known Hall-Witt formula for group commutators.

\subsection{Magnus Lie algebras of pro-$p$ completions}
Let $G$ be a finitely generated nilpotent group.
As $\Z_p$ is a binomial lambda ring, there is a category of $\Z_p$-powered nilpotent groups introduced by P. Hall in \cite[§6]{Hall69}. 
In particular there exists a canonical map 
\begin{equation}
    h_{G, \Z_p} : G\longrightarrow G\otimes{\Z_p}
\end{equation}
from the nilpotent group $G$ to the $\Z_p$-powered nilpotent group $G \otimes {\Z_p}$ which is characterized by a universal property. 
This map shows that the   functor $-\otimes \Z_p $ is the left adjoint to the forgetful functor from the category of $\Z_p$-powered nilpotent groups to the category of nilpotent groups.
Moreover 
\begin{equation}
    \grM_\bullet(G \otimes \Z_p)\cong \grM_\bullet(G) \otimes \Z_p.
\end{equation}
\noindent
If $G$ is a finitely generated nilpotent group, and $g_1, \dots, g_n$ are elements in $G$ such that $G = \prod_{i = 1}^n g_i^\Z$, then $G \otimes \Z_p = \prod_{i = 1}^n g_i^{\Z_p}$. Hence $G \otimes \Z_p$ is a pro-finite group in which every element generates a cyclic pro-$p$ group, i.e. $G\otimes \Z_p$ is a pro-$p$ group.
Thus, $h_{G, \Z_p}(G)$ coincides with the pro-$p$ completion $\hat G_p$ of $G$. This shows that
\begin{equation}\label{eq:grM.comp}
    \grM_\bullet(\hat G_p)\cong \grM_\bullet(G)\otimes \Z_p
\end{equation}
holds for all finitely generated nilpotent groups and thus for all finitely generated groups.

\subsection{Dimension subgroups} Let $G$ be a group and let $R$ be a commutative unital ring. The \emph{augmentation ideal} of the group ring $RG$ as the kernel $\omega_R(G)$ of the augmentation map $\varepsilon : RG \to R$ defined by sending each group element $g \in G$ to the unit $1 \in R$. The augmentation ideal coincides with the ideal generated by elements of $RG$ of the form $1-g$ with $g \in G$.  

\begin{defn}
    For a group $G$ and ring $R$, we define the $n$-th \emph{dimension subgroup of $G$ over $R$} as:
    \[
        D_n(G, R) := \{g \in G \mid 1-g \in \omega_R(G)^n\}
    \]
\end{defn}
\noindent
It can be shown that $D_n(G, R)$ is a subgroup of $G$ for every $n \geq 1$. \\
For a long time, the dimension subgroup series over $\Z$ and the lower central series were believed to coincide, this was known as the ``Dimension Subgroup Conjecture''. While it is true that, for any group $G$, one has $D_2(G, \Z) = \gamma_2(G)$ and even $D_3(G, \Z) = \gamma_3(G)$ (a deep theorem by Higman and Rees, cf. \cite{Pas79}, \cite{Gup90}), there exists a group $G$ such that $D_4(G, \Z)/\gamma_4(G)$ is nontrivial. This counterexample to the Dimension Subgroup Conjecture was constructed by Rips in \cite{Rip72}.
\medskip
For $R = \FF_p$, the dimension series $\{D_n(G, \FF_p)\}_{n \geq 1}$, is called the \emph{$p$-Zassenhaus} series of $G$ (also known as Jennings-Lazard-Zassenhaus series) and the associated restricted Lie algebra is denoted by $\grZ_\bullet(G)$. This series of subgroups is not only a strongly central series but it is also a \emph{$p$-central series}, i.e. it satisfies the additional property that $D_n(G, \FF_p)^p \leq D_{pn}(G, \FF_p)$ for all $n \geq 1$. Similar to the lower central series, the $p$-Zassenhaus series is the fastest descending $p$-central series.  
This additional property implies that the associated graded Lie algebra $\grZ_\bullet(G)$ equipped with a $p$-map induced by the group theoretic $p$-power map $x \mapsto x^p$, is a restricted Lie algebra over $\mathbb F_p$  (see \cite{Zas39}, \cite{Laz54}).
\medskip\\
A more explicit description of the $p$-Zassenhaus series was given by Jennings in \cite{Jen41}. In the following, we will fix a prime number $p$ and denote $D_n(G, \FF_p)$ simply by $D_n(G)$. 
\begin{theorem}[\cite{Jen41}, \cite{Laz54}]\label{th:jennings}
    Let $G$ be a group. Then for every $n \geq 1$ the subgroup $D_n(G)$ can be expressed as follows
    \begin{equation}\label{eq:dimension.jennings}
        D_n(G) = \prod_{\substack{i, j \geq 0 \\ ip^j \geq n}} \gamma_i(G)^{p^j}
    \end{equation}
    where $\gamma_i(G)^{p^j}$ is the subgroup generated by the $p^j$-th powers of elements of $\gamma_i(G)$.
\end{theorem}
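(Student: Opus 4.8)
The plan is to prove the two inclusions in~\eqref{eq:dimension.jennings} separately, writing $J_n(G) := \prod_{ip^j \geq n}\gamma_i(G)^{p^j}$ for the right-hand side (the \emph{Jennings subgroups}). I would first record the recursion $J_1(G) = G$, $J_n(G) = (J_{\lceil n/p\rceil}(G))^p\cdot[J_{n-1}(G), G]$ for $n \geq 2$ (standard, and equivalent to the displayed product formula), from which one reads off that $[J_i(G),J_j(G)]\subseteq J_{i+j}(G)$ and $J_i(G)^p\subseteq J_{ip}(G)$; in particular each $J_m(G)/J_{m+1}(G)$ is an $\FF_p$-vector space and $L_\bullet := \bigoplus_{m\geq1}J_m(G)/J_{m+1}(G)$ is a graded restricted $\FF_p$-Lie algebra with bracket and $p$-map induced by group commutator and group $p$-power.

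For the inclusion $J_n(G)\subseteq D_n(G)$ I would work in $\FF_p G$ with augmentation ideal $\omega := \omega_{\FF_p}(G)$ and establish two elementary estimates: if $1-x\in\omega^a$ and $1-y\in\omega^b$, then $1-[x,y]\in\omega^{a+b}$, using the identity $[x,y]-1 = x^{-1}y^{-1}\bigl((x-1)(y-1)-(y-1)(x-1)\bigr)$ together with $\omega^a\omega^b\subseteq\omega^{a+b}$; and if $1-x\in\omega^a$, then $1-x^p\in\omega^{pa}$, since $(x-1)^p = x^p-1$ inside the commutative characteristic-$p$ subalgebra $\FF_p[x]\subseteq\FF_p G$. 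Starting from $\gamma_1(G)=G=D_1(G)$ and iterating the first estimate gives $\gamma_i(G)\subseteq D_i(G)$, whence $\gamma_i(G)^{p^j}\subseteq D_{ip^j}(G)\subseteq D_n(G)$ whenever $ip^j\geq n$; as $D_n(G)$ is a subgroup, $J_n(G)\subseteq D_n(G)$.

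The substantial inclusion $D_n(G)\subseteq J_n(G)$ I would derive from the structure of the associated graded algebra $\gr_\omega\FF_p G = \bigoplus_{m\geq0}\omega^m/\omega^{m+1}$. Since $\Delta(\omega)\subseteq\omega\otimes\FF_p G+\FF_p G\otimes\omega$, the $\omega$-adic filtration is compatible with the Hopf structure of $\FF_p G$, so $\gr_\omega\FF_p G$ is a graded, connected, cocommutative $\FF_p$-Hopf algebra, generated in degree one by the primitive elements $\bar g := (g-1)+\omega^2$; by the restricted (characteristic-$p$) form of the Milnor--Moore theorem it is therefore isomorphic to $\rUniv(P)$, where $P := \Prim(\gr_\omega\FF_p G)$ is a graded restricted Lie algebra. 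The easy inclusion, applied to consecutive Jennings subgroups, yields an injection of graded restricted Lie algebras $L_\bullet\hookrightarrow P$ sending $gJ_{m+1}(G)$ to the class of $g-1$ in $\omega^m/\omega^{m+1}$ (for $g\in J_m(G)\setminus J_{m+1}(G)$), and the crux is to see that this injection is onto. For that I would lift a homogeneous $\FF_p$-basis of $L_\bullet$ to group elements $(x_\alpha)$ of weights $w(\alpha)$ and run a collection-process argument showing that the ordered monomials $\prod_\alpha(x_\alpha-1)^{e_\alpha}$ with $0\leq e_\alpha\leq p-1$ (larger exponents being redundant because $J_i(G)^p\subseteq J_{ip}(G)$) and $\sum_\alpha e_\alpha w(\alpha)\geq m$ span $\omega^m$; comparing with the PBW basis of $\rUniv(L_\bullet)$ forces $L_\bullet=P$ and identifies the grading of $\rUniv(L_\bullet)$ with the $\omega$-adic grading. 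Finally, if $g\in D_n(G)$, i.e.\ $g-1\in\omega^n$, then expressing $g-1$ in this monomial basis and examining its lowest-weight term --- which, if $g\notin J_n(G)$, would be a single generator of weight $<n$ --- forces $g\in J_n(G)$.

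I expect the main obstacle to be exactly this spanning statement for $\omega^m$, i.e.\ showing that the generators coming from the Jennings subgroups already account for all of $\gr_\omega\FF_p G$; this is where the collection process and the characteristic-$p$ relations $J_i(G)^p\subseteq J_{ip}(G)$ --- precisely the features absent from the Magnus $\Z$-Lie algebra --- carry the argument. By contrast, the two elementary estimates in $\FF_p G$, the identification of $P$ with $L_\bullet$ as a subspace via the easy inclusion, and the concluding leading-term argument are essentially bookkeeping.
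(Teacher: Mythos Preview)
The paper does not actually prove this theorem: it is stated with attribution to Jennings and Lazard and then used as a black box for the subsequent development (the refined subgroups $D_{n,k}(G)$, the Hall--Petrescu estimates, and ultimately Theorem~B). So there is no proof in the paper against which to compare your proposal.

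That said, your sketch follows the classical Jennings--Lazard--Quillen line and is broadly sound, with one point that needs more care. You assert that the easy inclusion $J_m(G)\subseteq D_m(G)$ already yields an \emph{injection} $L_\bullet\hookrightarrow P=\Prim(\gr_\omega\FF_pG)$. It does not: it only gives a well-defined homomorphism of graded restricted Lie algebras, since $g\in J_{m+1}(G)\subseteq D_{m+1}(G)$ guarantees $g-1\in\omega^{m+1}$. Injectivity is the statement that $g\in J_m(G)$ together with $g-1\in\omega^{m+1}$ forces $g\in J_{m+1}(G)$, which is precisely the hard inclusion $D_{m+1}(G)\subseteq J_{m+1}(G)$ you are trying to establish. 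The argument avoids circularity only if you extract injectivity \emph{from} the collection step rather than feed it in: once the ordered monomials are shown to span each $\omega^m/\omega^{m+1}$ you have a surjection $\rUniv(L_\bullet)\twoheadrightarrow\gr_\omega\FF_pG$, and one still needs a reason why this surjection is an isomorphism. For finite $p$-groups this is Jennings' dimension count $\dim_{\FF_p}\FF_pG=|G|=p^{\sum_m\dim L_m}$; for arbitrary $G$ one reduces to the finite $p$-quotients $G/J_N(G)$. Your phrase ``comparing with the PBW basis of $\rUniv(L_\bullet)$ forces $L_\bullet=P$'' is exactly where this hidden step lives. With that adjustment the outline matches the proofs in the references the paper cites.
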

One can refine the $p$-Zassenhaus series by defining, for $1 \leq k \leq n$, the subgroups:
\begin{equation}\label{eq:dimension.jennings.refined}
    D_{n, k}(G) := \prod_{\substack{i \geq k,\, j \geq 0 \\ ip^j \geq n}} \gamma_i(G)^{p^j}
\end{equation}

\begin{remark}\label{rem:explicit.filtration}
    Let $n, i \geq 1$ and define $j(n, i)$ to be the least positive integer $j$ such that $ip^{j} \geq n$. Then, for all $i \leq k \leq n$, one has:
    \begin{equation}                        D_{n, k}(G) = \prod_{i=k}^n \gamma_i(G)^{p^{j(n, i)}} 
    \end{equation}
    in particular, the following inclusions hold:
    \[
        D_n(G) = D_{n, 1}(G) \geq D_{n, 2}(G) \geq \cdots \geq D_{n, n}(G) = \gamma_n(G).
    \] 
\end{remark}

\begin{prop}\label{prop:hall}
    Let $G$ be a group, $i \geq 1$ and $j \geq 0$, the following hold:
    \begin{enumerate}
        \item $(xy)^{p^j} \equiv x^{p^j} y^{p^j} \mod D_{ip^j, i+1}(G)$ for every $x, y \in \gamma_i(G)$;
        \item $[x^{p^j}, y] \in D_{ip^j, i+1}(G)$ for every $x \in \gamma_i(G)$ and $y \in G$.
    \end{enumerate}
\end{prop}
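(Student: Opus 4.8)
The plan is to deduce both statements from the Hall--Petrescu collection formula, combined with the explicit description of the refined filtration $D_{n,k}(G)$ from Remark~\ref{rem:explicit.filtration} and with Kummer's identity $v_p\binom{p^j}{k} = j - v_p(k)$ for $1 \le k \le p^j$. I would first record two elementary observations. First, each $\gamma_\ell(G)^{p^m}$ is characteristic in $G$, so every $D_{n,k}(G)$ is a normal subgroup; in particular the congruence in (1) is well posed, and in (2) it is enough to land inside $D_{ip^j, i+1}(G)$ (conjugation being harmless). Second, writing $\binom{p^j}{k} = p^{\,j-v_p(k)}\,m_k$ with $\gcd(m_k, p) = 1$, one has $g^{\binom{p^j}{k}} = (g^{m_k})^{p^{\,j-v_p(k)}}$ for any $g$, so $g^{\binom{p^j}{k}}$ is a $p^{\,j-v_p(k)}$-th power of an element of $\langle g\rangle$; and since $p^{v_p(k)} \mid k$ we have $k \ge p^{v_p(k)}$, equivalently $k\,p^{\,j-v_p(k)} \ge p^j$. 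Combining these, for $k \ge 2$ and $i \ge 1$ we get $ki \ge i+1$ and $(ki)\,p^{\,j-v_p(k)} \ge i\,p^j$, so $\gamma_{ki}(G)^{p^{\,j-v_p(k)}}$ is one of the factors comprising $D_{ip^j, i+1}(G)$; likewise $\gamma_{i+1}(G)^{p^j} \subseteq D_{ip^j, i+1}(G)$ since $(i+1)p^j \ge ip^j$.

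For part (1), I would apply the collection formula to $x, y \in \gamma_i(G)$ with exponent $n = p^j$, obtaining $(xy)^{p^j} = x^{p^j} y^{p^j} \prod_{k=2}^{p^j} c_k^{\binom{p^j}{k}}$ with $c_k \in \gamma_k(\langle x, y\rangle) \subseteq \gamma_{ki}(G)$ (using $\langle x, y\rangle \subseteq \gamma_i(G)$). By the observations above, each $c_k^{\binom{p^j}{k}} \in \gamma_{ki}(G)^{p^{\,j-v_p(k)}} \subseteq D_{ip^j, i+1}(G)$, hence so does the whole product, yielding $(xy)^{p^j} \equiv x^{p^j} y^{p^j} \bmod D_{ip^j, i+1}(G)$. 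The case $j = 0$ is trivial.

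For part (2), I would write $x^y = y^{-1}xy$ and $c = x^{-1}x^y = [x,y] \in \gamma_{i+1}(G)$ (here $y \in G = \gamma_1(G)$), and use $[x^{p^j}, y] = x^{-p^j}(x^y)^{p^j} = x^{-p^j}(xc)^{p^j}$, valid since $(x^y)^{p^j} = y^{-1}x^{p^j}y$ and $x^y = xc$. Applying the collection formula to $xc$ with exponent $p^j$ and cancelling $x^{-p^j}x^{p^j}$ gives $[x^{p^j}, y] = c^{p^j}\prod_{k=2}^{p^j} d_k^{\binom{p^j}{k}}$ with $d_k \in \gamma_k(\langle x, c\rangle) \subseteq \gamma_{ki}(G)$ (because $\langle x, c\rangle \subseteq \gamma_i(G)$). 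Here $c^{p^j} \in \gamma_{i+1}(G)^{p^j} \subseteq D_{ip^j, i+1}(G)$ — this is exactly where the refinement index $i+1$ enters, since $c$ has weight $\ge i+1$ — and each $d_k^{\binom{p^j}{k}} \in \gamma_{ki}(G)^{p^{\,j-v_p(k)}} \subseteq D_{ip^j, i+1}(G)$ as in part (1), so $[x^{p^j}, y] \in D_{ip^j, i+1}(G)$.

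I do not expect a genuine obstacle; the content is bookkeeping. The points requiring care are: using the Hall--Petrescu formula in the precise form with $c_k \in \gamma_k(\langle g, h\rangle)$ and keeping the factors in the right order (so that the cancellation $x^{-p^j}x^{p^j}$ in part (2) is legitimate and the remaining product genuinely lies in the claimed subgroup); checking the numerical inequalities, above all $k \ge p^{v_p(k)}$, together with the boundary cases $k = p^j$ and $j = 0$; and confirming that $D_{ip^j, i+1}(G)$ is normal, so that all the membership and congruence assertions make sense.
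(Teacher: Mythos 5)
Your proof is correct and follows essentially the same route as the paper: both arguments rest on the Hall--Petrescu formula together with the observations that $\gamma_k(\langle x,y\rangle)\leq\gamma_{ki}(G)$ and that the resulting factors $\gamma_{ki}(G)^{p^{j-r}}$ all sit inside $D_{ip^j,\,i+1}(G)$. The only difference is cosmetic: the paper quotes the packaged congruences $(xy)^{p^j}\equiv x^{p^j}y^{p^j}$ and $[x^{p^j},y]\equiv[x,y]^{p^j}$ modulo $\gamma_2(H)^{p^j}\prod_{r=1}^{j}\gamma_{p^r}(H)^{p^{j-r}}$ directly from Huppert, whereas you re-derive them from the raw collection formula via Kummer's identity $v_p\binom{p^j}{k}=j-v_p(k)$ and the bound $k\geq p^{v_p(k)}$ --- a self-contained but equivalent bookkeeping.
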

\begin{proof}
    The classic Hall-Petrescu formulae \cite[I, §3 9.4]{Hup67} state the following relations
    \begin{equation}\label{eq:hall1}
        (xy)^{p^j} \equiv x^{p^j} y^{p^j} \mod \gamma_2(H)^{p^j} \prod_{r=1}^j \gamma_{p^r}(H)^{p^{j-r}}
    \end{equation}
    and 
    \begin{equation}\label{eq:hall2}
        [x^{p^j}, y] \equiv [x, y]^{p^j} \mod \gamma_2(K)^{p^j} \prod_{r=1}^j \gamma_{p^r}(K)^{p^{j-r}}
    \end{equation}
    where $H = \langle x, y \rangle$ and $K = \langle x, [x, y] \rangle$. Notice that when $x, y \in \gamma_i(G)$ we have  $\gamma_k(H) \leq \gamma_{ki}(G)$ for each $k\ge 1$. Hence, $\gamma_2(H)^{p^j} \leq \gamma_{2i}(G)^{p^j} \leq D_{2ip^j,2i}(G)$, while for $r \in{1, \dots, j}$ we have $\gamma_{p^r}(H)^{p^{j-r}} \leq \gamma_{ip^r}(G)^{p^{j-r}}\leq D_{ip^j,ip^r}(G)$.  Since these subgroups all belong to $D_{ip^j, i+1}(G)$, \eqref{eq:hall1} proves (1). \medskip \\
    The second assertion is proven similarly using \eqref{eq:hall2} and observing that if $x \in \gamma_i(G)$ then $[x, y]^{p^j} \in \gamma_{i+1}(G)^{p^j} \leq D_{ip^j, i+1}(G)$ as well.
\end{proof}

\subsection{A canonical homomorphism of graded Lie algebras}
Since the $p$-Zassenhaus series is a central series, for any group $G$ one has that $\gamma_n(G) \leq D_n(G)$ for all $n \geq 1$. So, for each $n \geq 1$ one may define a homomorphism of abelian groups 
\begin{equation}
    \rho_{G, n} : \grM_n(G) \to \grZ_n(G)
\end{equation}
given by $\rho_{G, n}(x\gamma_{n+1}(G)) := xD_{n+1}(G)$. The homomorphisms $\rho_{G, n}$ give rise to a morphism $\rho_{G, \bullet} : \grM_\bullet(G) \to \grZ_\bullet(G)$ of graded abelian groups, which turns out to be a map of graded $\Z$-Lie algebras. 

\begin{prop}
    The image of $\grM_\bullet(G)$ under $\rho_{G, \bullet}$ is isomorphic to:
    \begin{equation}\label{eq:im.rho}
        \grM_\bullet(G)/p\grM_\bullet(G) \cong \bigoplus_{n \geq 1} \gamma_n(G)/\gamma_n(G)^p\gamma_{n+1}(G) .
    \end{equation}
\end{prop}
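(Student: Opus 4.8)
The plan is to reduce everything to a single group‑theoretic identity. The second isomorphism is a mere reformulation of the definitions: in degree $n$ one has $\grM_n(G)=\gamma_n(G)/\gamma_{n+1}(G)$, and the image of multiplication by $p$ on this abelian group is exactly $\gamma_n(G)^p\gamma_{n+1}(G)/\gamma_{n+1}(G)$, whence $\grM_n(G)/p\grM_n(G)\cong\gamma_n(G)/\gamma_n(G)^p\gamma_{n+1}(G)$; one then takes the direct sum over $n$. The content is the first isomorphism. Since $\rho_{G,\bullet}$ is a homomorphism of graded Lie algebras, $\operatorname{im}(\rho_{G,\bullet})\cong\grM_\bullet(G)/\kernel(\rho_{G,\bullet})$, and unwinding the definitions $\kernel(\rho_{G,n})=\big(\gamma_n(G)\cap D_{n+1}(G)\big)/\gamma_{n+1}(G)$. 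So it suffices to prove $\gamma_n(G)\cap D_{n+1}(G)=\gamma_n(G)^p\gamma_{n+1}(G)$ for every $n\ge 1$.

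The inclusion ``$\supseteq$'' is immediate from Theorem~\ref{th:jennings}: $\gamma_n(G)^p=\gamma_n(G)^{p^{1}}\le D_{np}(G)\le D_{n+1}(G)$ since $np\ge n+1$, and $\gamma_{n+1}(G)\le D_{n+1}(G)$; both subgroups clearly lie in $\gamma_n(G)$. For ``$\subseteq$'' I would split the Jennings product: by Theorem~\ref{th:jennings} and Remark~\ref{rem:explicit.filtration}, $D_{n+1}(G)=D_{n+1,n}(G)\cdot B$ with $D_{n+1,n}(G)=\gamma_n(G)^p\gamma_{n+1}(G)$ (the part carried by the $\gamma_i$ with $i\ge n$) and $B=\prod_{i=1}^{n-1}\gamma_i(G)^{p^{j(n+1,i)}}$ (the part carried by $i<n$). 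As $D_{n+1,n}(G)\le\gamma_n(G)$ and all these subgroups are normal, the Dedekind modular law gives $\gamma_n(G)\cap D_{n+1}(G)=D_{n+1,n}(G)\cdot\big(\gamma_n(G)\cap B\big)$, so it remains to show $\gamma_n(G)\cap B\le D_{n+1,n}(G)$.

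Passing to $Q:=G/D_{n+1,n}(G)=G/\gamma_n(G)^p\gamma_{n+1}(G)$, this reduces to the claim $D_{n+1}(Q)\cap\gamma_n(Q)=1$, where now $Q$ has nilpotency class at most $n$ and $\gamma_n(Q)$ is a central subgroup of exponent dividing $p$. Indeed, by Theorem~\ref{th:jennings} the terms $\gamma_i(Q)^{p^j}$ with $i>n$ (class reasons) and with $i=n$, $j\ge 1$ (exponent $p$) all vanish, while $i=n$, $j=0$ does not occur since $n<n+1$; hence $D_{n+1}(Q)$ is generated by the $p^{j}$‑th powers ($j\ge 1$) of elements of the lower terms $\gamma_i(Q)$ with $i\le n-1$, and one must show that this subgroup meets the top term $\gamma_n(Q)$ trivially. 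I expect this last point to be the main obstacle.

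To handle it I would first reduce to the case in which $Q$ is finitely generated and $\gamma_n(Q)$ is cyclic of order $p$: a putative nontrivial element of $\gamma_n(Q)\cap D_{n+1}(Q)$, together with the finitely many commutators and group elements witnessing membership in $\gamma_n(Q)$ and in $1+\omega^{n+1}$, lies in a finitely generated subgroup, after which one factors out an $\FF_p$‑hyperplane of $\gamma_n(Q)$ avoiding that element. In this situation the claim is precisely that the natural map $\grM_n(Q)\otimes\FF_p\to\grZ_n(Q)$ induced by $\rho_{Q,n}$ is injective, and I would deduce it from Jennings' structural description of the Zassenhaus Lie algebra — e.g. from the isomorphism $\gr_\omega(\FF_pQ)\cong\rUniv(\grZ_\bullet(Q))$ between the associated graded of the augmentation filtration and the restricted enveloping algebra, together with the PBW basis of the latter — or else simply cite the classical fact that the Zassenhaus filtration induces on each $\grM_n(G)$ the $p$‑adic filtration (Jennings, Lazard). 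Granting this, $\gamma_n(G)\cap D_{n+1}(G)=\gamma_n(G)^p\gamma_{n+1}(G)$, so $\kernel(\rho_{G,n})=p\,\grM_n(G)$ and $\operatorname{im}(\rho_{G,n})\cong\grM_n(G)/p\grM_n(G)$, which is the assertion.
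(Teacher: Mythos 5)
Your overall strategy coincides with the paper's: factor $\rho_{G,n}$ through $D_n(G)/\gamma_{n+1}(G)$, identify $\kernel(\rho_{G,n})=(\gamma_n(G)\cap D_{n+1}(G))/\gamma_{n+1}(G)$, observe that $p\grM_n(G)$ corresponds to $\gamma_n(G)^p\gamma_{n+1}(G)/\gamma_{n+1}(G)$, and reduce everything to the single identity $\gamma_n(G)\cap D_{n+1}(G)=\gamma_n(G)^p\gamma_{n+1}(G)$. Your easy inclusion and the Dedekind-law reduction are correct, and you are right to single out the remaining inclusion as the main obstacle; the paper disposes of it in one line by asserting $\gamma_n(G)\cap D_{n+1}(G)=D_{n+1,n}(G)$ ``using'' \eqref{eq:dimension.jennings.refined}, which is only the \emph{definition} of $D_{n+1,n}(G)$ and says nothing about intersections.

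The genuine gap is that the step you defer to a ``classical fact'' is not a fact for arbitrary groups: the inclusion $\gamma_n(G)\cap D_{n+1}(G)\subseteq\gamma_n(G)^p\gamma_{n+1}(G)$ can fail. Take $p$ odd and $G=\Z/p^2\rtimes\Z$, where the generator $y$ of $\Z$ acts on the generator $x$ of $\Z/p^2$ by $x\mapsto x^{1+p}$. Then $[x,y]=x^p$, so $\gamma_2(G)=\langle x^p\rangle$ has order $p$ and $\gamma_3(G)=1$, whence $\gamma_2(G)^p\gamma_3(G)=1$; but $1-x^p=(1-x)^p\in\omega_{\FF_p}(G)^p\subseteq\omega_{\FF_p}(G)^3$, so $x^p\in\gamma_2(G)\cap D_3(G)$ and that intersection is all of $\gamma_2(G)$. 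Consequently $\mathrm{im}(\rho_{G,2})=0$ while $\grM_2(G)/p\grM_2(G)\cong\Z/p$, so the target statement itself fails for this $G$. In particular your final appeals cannot succeed: ``the Zassenhaus filtration induces the $p$-adic filtration on $\grM_n(G)$'' is precisely the claim at issue, so citing it is circular, and Quillen's isomorphism $\gr_\omega(\FF_p G)\cong\rUniv(\grZ_\bullet(G))$ constrains $\grZ_\bullet(G)$ but not the map from $\grM_\bullet(G)$. What makes the identity true in the paper's applications is the divisibility statement of Proposition~\ref{prop:basis}(2), which requires the $\gamma$-free hypothesis; without some such hypothesis, torsion across layers (here $x\in\gamma_1(G)$ with $1\neq x^p\in\gamma_2(G)$) pushes elements of $\gamma_n(G)$ into $D_{n+1}(G)$ that are not $p$-th powers modulo $\gamma_{n+1}(G)$. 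So your argument cannot be completed as written, and the same objection applies to the paper's own justification of this step.
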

\begin{proof}
Each homomorphism $\rho_{G, n}$ for $n\geq 1$ can be written as the composition of an inclusion followed by a projection:
\[\begin{tikzcd}
    \rho_{G, n} : \grM_n(G) \arrow[r, hook] & D_{n}(G)/\gamma_{n+1}(G) \arrow[r, two heads, "\pi"] & \grZ_n(G) 
\end{tikzcd}\]
    Since $\ker(\pi) = D_{n+1}(G)/\gamma_{n+1}(G)$, the kernel of $\rho_{G, n}$ can be expressed as $\ker(\rho_{G, n}) = (\gamma_n(G) \cap D_{n+1}(G))/{\gamma_{n+1}(G)}$. Using Jenning's (refined) description of the $p$-Zassenhaus series \eqref{eq:dimension.jennings.refined}, one finds the following equalities
    \[
        \gamma_n(G) \cap D_{n+1}(G) = D_{n+1, n}(G) = \gamma_n(G)^{p^{j(n+1, n)}}\gamma_{n+1}(G)^{p^{j(n+1, n+1)}} 
    \]
    where, as in Remark \ref{rem:explicit.filtration}, $j(n, i)$ denotes the smallest integer $j \geq 0$ such that $ip^j \geq n$. Clearly $j(n+1, n) = 1$ and $j(n+1, n+1) = 0$. So, by the first and third homomorphism theorems, one has that
    \[
        {\rm im}(\rho_{G, n}) \cong \gamma_n(G)/(\gamma_n(G)^p\gamma_{n+1}(G)) 
    \]
    for all $n \geq 1$, which proves the claim.
\end{proof}
Since $\grZ_\bullet(G)$ is naturally a graded Lie algebra over $\FF_p$, we will consider the induced homomorphism of $\FF_p$-Lie algebras: 
\begin{equation}
    \theta_{G, \bullet} = (\rho_{G, \bullet} \otimes {\rm id}_{\FF_p}) : \grM_\bullet(G) \otimes \FF_p \longrightarrow \grZ_\bullet(G)
\end{equation}
given by $\theta_{G, n}(x \gamma_{n+1}(G) \otimes 1) := \rho_n(x\gamma_{n+1}(G)) = xD_{n+1}(G)$ for each $x \in \gamma_n(G)$ and $n > 0$. From Proposition \ref{prop:adj.restrictification} \eqref{eq:adj.restr}
one deduces the existence of a natural homomorphism of restricted $\FF_p$-Lie algebras:
\begin{equation}\label{eq:nat.restr}
    \hat \theta_{G, \bullet} : (\grM_\bullet(G) \otimes \FF_p)^{[p]} \longrightarrow \grZ_\bullet(G)
\end{equation}
\noindent
As $\hat{\theta}_{G, 1}$ is an isomorphism by construction, and $\grZ_\bullet(G)$ is a $1$-generated $\mathbb N$-graded restricted $\FF_p$-Lie algebra, one concludes the following:

\begin{lem}\label{lem:hat.theta.epi}
	The map $\hat \theta_\bullet$ is surjective.
\end{lem}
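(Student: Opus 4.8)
The plan is to deduce the surjectivity of $\hat\theta_{G,\bullet}$ from the two facts recorded just before the statement: that $\hat\theta_{G,1}$ is an isomorphism, and that $\grZ_\bullet(G)$ is a $1$-generated $\mathbb N$-graded restricted $\FF_p$-Lie algebra. The whole argument is formal, and it splits into an easy closure observation, the degree-one input, and an appeal to $1$-generation.

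First I would observe that, being a morphism in $\FF_p\text{-}\catReslie$ between $\mathbb N$-graded objects, $\hat\theta_{G,\bullet}$ has image a graded $\FF_p$-subspace of $\grZ_\bullet(G)$ which is closed under the Lie bracket and under the $p$-map (since $\hat\theta_{G,\bullet}$ preserves both); hence $\operatorname{im}(\hat\theta_{G,\bullet})$ is an $\mathbb N$-graded restricted $\FF_p$-Lie subalgebra of $\grZ_\bullet(G)$. Next, since $\hat\theta_{G,1}$ is surjective---it is in fact an isomorphism by construction, because $D_2(G)=\gamma_2(G)G^p$ by Theorem \ref{th:jennings}, whence $\grZ_1(G)=G/\gamma_2(G)G^p=\grM_1(G)\otimes\FF_p$---this subalgebra contains the entire degree-one component $\grZ_1(G)$.

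It then remains to invoke the $1$-generation of $\grZ_\bullet(G)$: $\grZ_\bullet(G)$ is generated as a restricted $\FF_p$-Lie algebra by $\grZ_1(G)$. This is classical and can be read off from Jennings' formula \eqref{eq:dimension.jennings}: each $\grZ_n(G)$ is spanned by iterated Lie brackets and iterated $p$-maps applied to elements of $\grZ_1(G)$, which one sees by inducting on $i$ using that $\gamma_i(G)$ is generated by left-normed commutators in $G=D_1(G)$ together with Proposition \ref{prop:hall}. Granting this, any $\mathbb N$-graded restricted $\FF_p$-Lie subalgebra of $\grZ_\bullet(G)$ containing $\grZ_1(G)$ must be all of $\grZ_\bullet(G)$; applying this to $\operatorname{im}(\hat\theta_{G,\bullet})$ finishes the proof.

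There is essentially no obstacle here: the only ingredient that is not a pure formality is the $1$-generation of $\grZ_\bullet(G)$, and even that is standard (Jennings, Lazard, Quillen) and could be either cited or derived in a line from Theorem \ref{th:jennings} as indicated. So the ``hard part'' is really only deciding how much of this well-known background to spell out.
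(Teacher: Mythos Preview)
Your proposal is correct and follows exactly the approach the paper uses: the paper's entire argument is the sentence preceding the lemma, namely that $\hat\theta_{G,1}$ is an isomorphism and $\grZ_\bullet(G)$ is $1$-generated as an $\mathbb N$-graded restricted $\FF_p$-Lie algebra, which is precisely what you unpack. Your additional remarks (why $\hat\theta_{G,1}$ is an isomorphism via $D_2(G)=\gamma_2(G)G^p$, and how $1$-generation follows from Theorem~\ref{th:jennings}) are correct elaborations of facts the paper takes as given.
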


\section{$\gamma$-free groups}
In this section, we study the properties of the map $\hat\theta_\bullet$ for a particular class of (pro-$p$) groups.

\begin{defn}
    A (pro-$p$) group $G$ is said to be $\gamma$-free if $\gamma_n(G)/\gamma_{n+1}(G)$ is a free abelian (pro-$p$) group for all $n \geq 1$.
\end{defn}

The property of a group being $\gamma$-free can be stated equivalently in terms of its nilpotent quotients.

\begin{lem}\label{lem:gamma.free.equiv}
    For $G$ a finitely generated (pro-$p$) group, the following are equivalent:
    \begin{itemize}
        \item[(i)] $G$ is $\gamma$-free;
        \item[(ii)] $G/\gamma_n(G)$ is $\gamma$-free for all $n \geq 1$; and
        \item[(iii)] $G/\gamma_n(G)$ is torsion-free for all $n \geq 1$. 
    \end{itemize}
\end{lem}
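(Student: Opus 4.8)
The plan is to prove the cycle of implications $(i) \Rightarrow (ii) \Rightarrow (iii) \Rightarrow (i)$, exploiting the fact that for a finitely generated (pro-$p$) group each quotient $\gamma_n(G)/\gamma_{n+1}(G)$ is a finitely generated abelian (pro-$p$) group, hence is free abelian (resp. free $\Z_p$-module) if and only if it is torsion-free. First I would record this structural observation so that ``$\gamma$-free'' is literally ``$\grM_n(G)$ torsion-free for all $n$'', and similarly for the quotients $G/\gamma_n(G)$, whose own lower central quotients are just the $\grM_k$'s with $k < n$ (since $\gamma_k(G/\gamma_n(G)) = \gamma_k(G)/\gamma_n(G)$ for $k \leq n$ and is trivial for $k \geq n$).

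For $(i) \Rightarrow (ii)$: if every $\grM_k(G)$ is torsion-free, then in particular $\grM_k(G/\gamma_n(G)) \cong \grM_k(G)$ is torsion-free for $k < n$ and is zero for $k \geq n$, so $G/\gamma_n(G)$ is $\gamma$-free by definition. The implication $(iii) \Rightarrow (i)$ is dual: suppose some $\grM_n(G) = \gamma_n(G)/\gamma_{n+1}(G)$ has torsion, witnessed by an element $x\gamma_{n+1}(G)$ with $x^m \in \gamma_{n+1}(G)$ but $x \notin \gamma_{n+1}(G)$ for some $m \geq 2$; then the image of $x$ in $G/\gamma_{n+1}(G)$ is a nontrivial torsion element (it lies in $\gamma_n(G)/\gamma_{n+1}(G)$ and is killed by $m$), contradicting $(iii)$ with the choice of index $n+1$. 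Hence $(iii)$ forces every $\grM_n(G)$ torsion-free, i.e. $G$ is $\gamma$-free.

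The only implication requiring genuine work is $(ii) \Rightarrow (iii)$, and in fact it is immediate once one knows that a \emph{nilpotent} finitely generated (pro-$p$) group all of whose lower central quotients are free abelian (resp. free $\Z_p$-modules) is itself torsion-free. This is the crux: a finitely generated nilpotent (pro-$p$) group $N$ possesses a central series $N = N_1 \trianglerighteq N_2 \trianglerighteq \cdots \trianglerighteq N_c \trianglerighteq N_{c+1} = 1$ (the lower central series) with each $N_k/N_{k+1}$ torsion-free; one then argues by downward induction on $k$ that $N/N_{k+1}$ is torsion-free. The base case $N/N_2 = \grM_1(N)$ is torsion-free by hypothesis, and for the inductive step one has a central extension $1 \to N_k/N_{k+1} \to N/N_{k+1} \to N/N_k \to 1$ with torsion-free kernel and (inductively) torsion-free quotient; a central extension of a torsion-free group by a torsion-free central subgroup is torsion-free (if $g^m = 1$ then the image of $g$ in $N/N_k$ is trivial, so $g \in N_k/N_{k+1}$, whence $g = 1$). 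Applying this with $N = G/\gamma_n(G)$ for each $n$ yields $(iii)$.

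The main obstacle, such as it is, is purely the pro-$p$ bookkeeping: one must make sure that ``free abelian pro-$p$'' means ``torsion-free finitely generated $\Z_p$-module'', that closures of lower central terms behave well under the finite quotients $G/\gamma_n(G)$ (which are genuinely finite pro-$p$ groups' inverse limit is replaced here by an honest finitely generated nilpotent pro-$p$ group), and that the central-extension argument is valid verbatim in the pro-$p$ category, where it is, since all groups involved are finitely generated nilpotent and the relevant subgroups are closed. No deep input is needed beyond the elementary structure theory of finitely generated nilpotent groups and their pro-$p$ analogues.
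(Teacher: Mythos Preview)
The paper states this lemma without proof, treating it as an elementary observation (it is immediately followed by Remark~\ref{rem:nilpotent.quotients} with no intervening \texttt{proof} environment). Your argument is correct and is exactly the standard one: identify the lower central quotients of $G/\gamma_n(G)$ with those of $G$ in low degree, and for $(ii)\Rightarrow(iii)$ use the central-extension bootstrap to see that a nilpotent group with torsion-free successive lower central quotients is itself torsion-free.

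One small cosmetic point: you write ``downward induction on $k$'' but then start at $k=1$ and move up; the argument you actually give is ordinary (upward) induction, passing from $N/N_k$ to $N/N_{k+1}$. Fixing that word makes the write-up clean.
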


\begin{remark}\label{rem:nilpotent.quotients}
	The associated graded Lie algebras of a group $G$ and its nilpotent quotients $G/\gamma_c(G)$ are closely related. Indeed since for all $1 \leq n \leq c$ we have $\gamma_n(G/\gamma_c(G)) = \gamma_n(G)/\gamma_c(G)$, then 
\[
	\grM_n(G/\gamma_c(G)) \cong \begin{cases}
		\grM_n(G) & \text{if $1 \leq n < c$}\\
		0 & n \geq c
\end{cases}
\]

\end{remark}

\begin{prop}\label{prop:basis}
    Let $G$ be a nilpotent $\gamma$-free (pro-$p$) group of nilpotency class $c$. For every $n \geq 1$, one can pick a $\Z$-basis (resp. $\Z_p$-basis) $\{x_{n, \lambda} \gamma_{n+1}(G)\}_{\lambda \in \Lambda_n}$ of the free abelian (pro-$p$) group $\grM_n(G) = \gamma_n(G)/\gamma_{n+1}(G)$ where the indexing set $\Lambda_n$ is totally ordered. Then, the following hold:

    \begin{enumerate}
        \item For every $g \in G$ there exist unique elements $\alpha_{n, \lambda} = \alpha_{n, \lambda}(g)$ in $\Z$ (resp. $\Z_p$) with $n \in \{1, \dots, c\}$ and $\lambda \in \Lambda_n$ such that: 
        \[
            g = \prod_{n = 1}^c \prod_{\lambda \in \Lambda_n} x_{n, \lambda}^{\alpha_{n, \lambda}}
        \]
        where the indices in the product are ordered lexicographically (i.e. $(n, \lambda) \prec (n', \lambda')$ if either $n < n'$ or $n = n'$ and $\lambda < \lambda'$ in $\Lambda_n$).

        \item Whenever $g \in D_{n,k}(G)$ for some $n \geq k$, one has that $p^{j(n, i)}$ divides $\alpha_{i, \lambda}(g)$ for all $k \leq i \leq n$.
    \end{enumerate}
\end{prop}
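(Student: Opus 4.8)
The plan is to prove parts~(1) and~(2) together by induction on the nilpotency class $c$ of $G$, the case $c=1$ being the uniqueness of coordinates in the free abelian (pro-$p$) group $G=\grM_1(G)$. For the inductive step I set $N:=\gamma_c(G)$; by $\gamma$-freeness $N$ is a free abelian (pro-$p$) group, central in $G$, with basis $\{x_{c,\lambda}\}_{\lambda\in\Lambda_c}$, while $\bar G:=G/N$ is $\gamma$-free of class $c-1$ with $\grM_i(\bar G)\cong\grM_i(G)$ for $i<c$ (Lemma~\ref{lem:gamma.free.equiv}, Remark~\ref{rem:nilpotent.quotients}). Throughout, the "resp." statements for the pro-$p$ case are handled by working with closed subgroups and $\Z_p$-coefficients.

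For part~(1), given $g\in G$ I would apply the inductive hypothesis to $\bar g\in\bar G$ to get a normal form $\bar g=\prod_{i=1}^{c-1}\prod_{\lambda\in\Lambda_i}\bar x_{i,\lambda}^{\alpha_{i,\lambda}}$, lift its right-hand side to $G$ using the chosen representatives, and observe that left-multiplying $g$ by the inverse of this lift produces an element of $N$, which by freeness of $N$ has a unique expression $\prod_{\lambda\in\Lambda_c}x_{c,\lambda}^{\alpha_{c,\lambda}}$; concatenating yields the asserted form. Uniqueness follows by reducing modulo $N$ (which forces the weight-$<c$ exponents via the inductive hypothesis) and then using freeness of $N$ for the top exponents. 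Two facts recorded here are used below: $g\in\gamma_m(G)$ if and only if $\alpha_{i,\lambda}(g)=0$ for all $i<m$, and the weight-$<c$ coordinates of $g$ coincide with those of $\bar g$.

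For part~(2), let $g\in D_{n,k}(G)$; we may assume $k\le c$ (otherwise there is nothing to prove). Since quotients carry lower-central terms and power subgroups to their counterparts, the image of $g$ in $\bar G$ lies in $D_{n,k}(\bar G)$, so the inductive hypothesis gives $p^{j(n,i)}\mid\alpha_{i,\lambda}(g)$ for $k\le i\le\min(n,c-1)$, while $\alpha_{i,\lambda}(g)=0$ for $i<k$ because $g\in\gamma_k(G)$. If $c>n$ this is already the full claim; if $c\le n$, then in the truncation $h:=\prod_{i=k}^{c-1}\prod_{\lambda}x_{i,\lambda}^{\alpha_{i,\lambda}(g)}$ each factor is a $p^{j(n,i)}$-th power of an element of $\gamma_i(G)$, so $h\in D_{n,k}(G)$; consequently the weight-$c$ block $h^{-1}g=\prod_{\lambda}x_{c,\lambda}^{\alpha_{c,\lambda}(g)}$ lies in $D_{n,k}(G)\cap N$. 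As $N$ is free abelian on $\{x_{c,\lambda}\}$, the proof is then finished once one shows $D_{n,k}(G)\cap\gamma_c(G)\subseteq\gamma_c(G)^{p^{j(n,c)}}$.

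I would establish this inclusion by a descending induction on $l$, from $l=c$ down to $l=k$, of the statement $D_{n,l}(G)\cap\gamma_c(G)\subseteq\gamma_c(G)^{p^{j(n,c)}}$, the case $l=c$ being immediate since $D_{n,c}(G)=\gamma_c(G)^{p^{j(n,c)}}$. For the step, writing $w\in D_{n,l}(G)\cap\gamma_c(G)$ as $w=rt$ with $r\in\gamma_l(G)^{p^{j(n,l)}}$ and $t\in D_{n,l+1}(G)$ (using $D_{n,l}(G)=\gamma_l(G)^{p^{j(n,l)}}D_{n,l+1}(G)$, Remark~\ref{rem:explicit.filtration}), normality of $\gamma_l(G)^{p^{j(n,l)}}$ together with $D_{n,l+1}(G),\gamma_c(G)\subseteq\gamma_{l+1}(G)$ gives $r\in\gamma_l(G)^{p^{j(n,l)}}\cap\gamma_{l+1}(G)$. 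The crux is the lemma $\gamma_l(G)^{p^m}\cap\gamma_{l+1}(G)\subseteq D_{lp^m,\,l+1}(G)$ (for $m\ge 0$), which with $m=j(n,l)$ yields $r\in D_{lp^{j(n,l)},\,l+1}(G)\subseteq D_{n,l+1}(G)$ (because $lp^{j(n,l)}\ge n$), so $w\in D_{n,l+1}(G)\cap\gamma_c(G)$ and the inner induction closes. This lemma is the main obstacle I anticipate: distributing a $p^m$-th power across a product $y_1^{p^m}\cdots y_r^{p^m}$ with $y_s\in\gamma_l(G)$ is legitimate only modulo $D_{lp^m,l+1}(G)$, so one must iterate the Hall--Petrescu congruence of Proposition~\ref{prop:hall}(1), carefully exploiting normality of the subgroups $D_{\bullet,\bullet}(G)$ to absorb all error terms, reducing matters to $(y_1\cdots y_r)^{p^m}\in\gamma_{l+1}(G)$; torsion-freeness of $\grM_l(G)$ then forces $y_1\cdots y_r\in\gamma_{l+1}(G)$, whence $(y_1\cdots y_r)^{p^m}\in\gamma_{l+1}(G)^{p^m}\subseteq D_{lp^m,l+1}(G)$.
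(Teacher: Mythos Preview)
Your argument is correct. The double induction (outer on the nilpotency class $c$, inner descending on $l$) is sound, and the key lemma $\gamma_l(G)^{p^m}\cap\gamma_{l+1}(G)\subseteq D_{lp^m,l+1}(G)$ is exactly the ingredient needed; your sketch of its proof via iterated use of Proposition~\ref{prop:hall}(1) together with torsion-freeness of $\grM_l(G)$ goes through. One minor remark: the appeal to normality in the inner induction step is unnecessary, since from $w=rt$ with $w\in\gamma_c(G)\subseteq\gamma_{l+1}(G)$ and $t\in D_{n,l+1}(G)\subseteq\gamma_{l+1}(G)$ you get $r=wt^{-1}\in\gamma_{l+1}(G)$ directly.

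The paper's proof is organized differently and is considerably terser. Rather than inducting on the class, it writes $g\in D_{n,k}(G)$ directly as a product $y_k^{p^{j(n,k)}}y_{k+1}^{p^{j(n,k+1)}}\cdots y_n^{p^{j(n,n)}}$ with $y_i\in\gamma_i(G)$ (Remark~\ref{rem:explicit.filtration}), observes via Proposition~\ref{prop:hall}(1) that the level-$i$ coordinates of $y_i^{p^{j(n,i)}}$ in $\grM_i(G)$ are divisible by $p^{j(n,i)}$, and asserts that the claim ``follows by inspecting each term of this product as above''. Taken literally this leaves a gap: Mal'cev coordinates are not additive over products, so the level-$i$ coordinates of $g$ are not simply those of the single factor $y_i^{p^{j(n,i)}}$; the earlier factors $y_m^{p^{j(n,m)}}$ with $m<i$ contribute as well once one divides out the lower-weight part. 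Making the paper's ``inspection'' rigorous amounts to showing $D_{n,l}(G)\cap\gamma_{l+1}(G)=D_{n,l+1}(G)$, which is equivalent to your key lemma. So your route is genuinely more explicit: you isolate and prove the one nontrivial step that the paper's argument passes over, at the cost of a longer proof; the paper's approach is shorter to state but relies on the same fact without naming it.
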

\begin{proof}
    Since $G$ is nilpotent of class $c$, then the graded Lie algebra $\grM_\bullet(G)$ associated to the lower central series is a \emph{finite} direct sum:
    \[
        \grM_\bullet(G) = G/\gamma_2(G) \oplus \cdots \oplus \gamma_{c-1}(G)/\gamma_c(G) \oplus \gamma_c(G)
    \]
    whose terms are all free abelian since $G$ is $\gamma$-free. So the elements $\{ x_{n, \lambda}\gamma_{n+1}(G) \in \grM_n(G) \mid n = 1, \dots, c; \, \lambda \in \Lambda_n\}$ form a $\Z$-basis (resp. $\Z_p$ basis) for $\grM_n(G)$.
    Hence for every $g \in G$ and $1 \leq n \leq c$ there exist unique coefficients $\alpha_{n, \lambda} = \alpha_{n, \lambda}(g)$ such that
    \[
        g = \prod_{n=1}^c\prod_{n, \lambda} x_{i, \lambda}^{\alpha_{n, \lambda}}
    \]
\noindent
    If $y \in \gamma_n(G)$, for each $1 \leq n \leq j$, we can express the image of both elements $y$ and $y^{p^j}$ in the quotient $\grM_n(G)$ as follows:
    \begin{align*}
        y &\equiv \prod\nolimits_{\lambda \in \Lambda_n} x_{n, \lambda}^{\alpha_{n, \lambda}} \mod \gamma_{n+1}(G), \\
        y^{p^j} &\equiv \prod\nolimits_{\lambda \in \Lambda_n} x_{n, \lambda}^{\beta_{n, \lambda}} \mod \gamma_{n+1}(G)
    \end{align*}
    where the integers $\alpha_{n, \lambda} = \alpha_{n, \lambda}(y)$ and $\beta_{n, \lambda} = \beta_{n, \lambda}(y^{p^j})$ for $\lambda \in \Lambda_n$ are uniquely determined as we proved in the first claim.
    From Proposition \ref{prop:hall} (1) we also find
    \[
        y^{p^j} \equiv \left( \prod\nolimits_{\lambda\in\Lambda_n} x_{n,\lambda}^{\alpha_n,\lambda}\right)^{p^j} \equiv \prod\nolimits_{\lambda\in\Lambda_n} x_{n,\lambda}^{p^j\alpha_n,\lambda} \mod \gamma_{n+1}(G)
    \]
    so, by uniqueness, we must have $\beta_{n, \lambda} = p^j \alpha_{n, \lambda}$ for all $\lambda \in \Lambda_n$, i.e. $p^j$ divides each coefficient $\beta_{n, \lambda}$.
\noindent
    Finally, by Remark \ref{rem:explicit.filtration}, any $g \in D_{n, k}(G)$ can be expressed as a product
    \[
        g = y_k^{p^{j(n, k)}} y_{k-1}^{p^{j(n, k-1)}} \cdots y_{n}^{p^{j(n, n)}}
    \]
    for some (not necessarily unique) elements $y_i \in \gamma_i(G)$ for $k \leq i \leq n$. The second claim follows by inspecting each term of this product as above. 
\end{proof}

\noindent
We can now prove the main theorem of this paper.
\begin{thmA}\label{th:hat.theta.iso}
	Let $G$ be a finitely generated $\gamma$-free (pro-$p$) group, then the natural map
\[
	\hat\theta_\bullet : (\grM_\bullet(G) \otimes \FF_p)^{[p]} \to \grZ_\bullet(G)
\]
	is an isomorphism of restricted Lie algebras.
\end{thmA}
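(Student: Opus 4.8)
The plan is as follows. By Lemma~\ref{lem:hat.theta.epi} the map $\hat\theta_n$ is surjective for every $n\ge1$, and for finitely generated $G$ both $(\grM_\bullet(G)\otimes\FF_p)^{[p]}_n$ and $\grZ_n(G)$ are finite-dimensional over $\FF_p$ (for the latter, $G/D_{n+1}(G)$ is a finite $p$-group). Hence it suffices to check that $\dim_{\FF_p}(\grM_\bullet(G)\otimes\FF_p)^{[p]}_n=\dim_{\FF_p}\grZ_n(G)$ for each $n$; a bijective homomorphism of restricted Lie algebras is automatically an isomorphism.

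The first step is a reduction to the nilpotent case. Fix $n$, put $c:=n+1$, and pass to $\bar G:=G/\gamma_c(G)$, which by Lemma~\ref{lem:gamma.free.equiv} is again finitely generated, nilpotent of class $<c$, and $\gamma$-free. By Remark~\ref{rem:nilpotent.quotients} the induced map $(\grM_\bullet(G)\otimes\FF_p)^{[p]}_n\to(\grM_\bullet(\bar G)\otimes\FF_p)^{[p]}_n$ is an isomorphism (the degree-$n$ part only involves the $\grM_i$ with $i\le n$); and substituting $\gamma_a(G/\gamma_c(G))=\gamma_a(G)/\gamma_c(G)$ for $a\le c$ into Jennings' formula~\eqref{eq:dimension.jennings} — the terms with $a\ge c$ being absorbed into $\gamma_c(G)$ — gives $D_i(\bar G)=D_i(G)\gamma_c(G)/\gamma_c(G)$ for all $i$. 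Since $\gamma_c(G)\le\gamma_{i+1}(G)\le D_{i+1}(G)$ for $i<c$, this simplifies to $D_i(\bar G)=D_i(G)/\gamma_c(G)$ for $i\le c$, so the natural map $\grZ_n(G)\to\grZ_n(\bar G)$ is an isomorphism. By naturality of $\hat\theta_\bullet$ it now suffices to treat the case of $G$ finitely generated nilpotent and $\gamma$-free.

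So let $G$ be finitely generated nilpotent of class $c$ and $\gamma$-free, and fix bases $\{x_{m,\mu}\gamma_{m+1}(G)\}_{\mu\in\Lambda_m}$ of the $\grM_m(G)$ as in Proposition~\ref{prop:basis}, so that every $g\in G$ has a unique normal form $g=\prod_{m=1}^{c}\prod_{\mu}x_{m,\mu}^{\alpha_{m,\mu}(g)}$. The crux is the identity
\[
    D_n(G)=\{\,g\in G : p^{\,j(n,m)}\mid\alpha_{m,\mu}(g)\ \text{for all }1\le m\le c,\ \mu\in\Lambda_m\,\},
\]
with $j(n,m)$ the least $j\ge0$ such that $mp^{j}\ge n$ (cf.\ Remark~\ref{rem:explicit.filtration}): the inclusion ``$\subseteq$'' is Proposition~\ref{prop:basis}(2) taken with $k=1$, and ``$\supseteq$'' follows by factoring $g=\prod_{m,\mu}(x_{m,\mu}^{p^{j(n,m)}})^{\alpha_{m,\mu}(g)/p^{j(n,m)}}$ into powers $x_{m,\mu}^{p^{j(n,m)}}\in\gamma_m(G)^{p^{j(n,m)}}\subseteq D_n(G)$, using Theorem~\ref{th:jennings}. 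Via the normal-form bijection $g\mapsto(\alpha_{m,\mu}(g))_{m,\mu}$ this identifies $\grZ_n(G)=D_n(G)/D_{n+1}(G)$ with $\prod_{m=1}^{c}\left(p^{\,j(n,m)}\Z/p^{\,j(n+1,m)}\Z\right)^{\operatorname{rk}_\Z\grM_m(G)}$, of $\FF_p$-dimension $\sum_{m}\left(j(n+1,m)-j(n,m)\right)\operatorname{rk}_\Z\grM_m(G)$. Since $j(n+1,m)-j(n,m)$ equals $1$ precisely when $n/m$ is a power of $p$ (equivalently $n=mp^{j}$ for some $j\ge0$) and is $0$ otherwise, this dimension equals $\sum_{ip^{j}=n,\ 1\le i\le c}\operatorname{rk}_\Z\grM_i(G)$. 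On the source side, the grading~\eqref{eq:pbw.restrictification} of a restrictification satisfies $\dim_{\FF_p}\LL^{[p]}_n=\sum_{ip^{j}=n}\dim_{\FF_p}\LL_i$ by the PBW theorem, so $\dim_{\FF_p}(\grM_\bullet(G)\otimes\FF_p)^{[p]}_n=\sum_{ip^{j}=n}\operatorname{rk}_\Z\grM_i(G)$, the sum being effectively over $1\le i\le c$ as $\grM_i(G)=0$ for $i>c$. The two dimensions agree, so the surjection $\hat\theta_n$ is an isomorphism, and therefore so is $\hat\theta_\bullet$. The pro-$p$ case runs in parallel with $\Z_p$-coefficients, or follows from the discrete case via~\eqref{eq:grz.pro-p.completion} and~\eqref{eq:grM.comp}.

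I expect the main obstacle to be the explicit description of $D_n(G)$ for finitely generated nilpotent $\gamma$-free $G$, together with the elementary but fiddly bookkeeping with the threshold function $j(n,m)$ that converts it into the dimension count; the reduction to the nilpotent case and the matching of the two dimensions are then essentially formal, given Proposition~\ref{prop:basis}, Theorem~\ref{th:jennings}, and Lemma~\ref{lem:hat.theta.epi}.
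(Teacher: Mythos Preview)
Your argument follows the same skeleton as the paper's proof: reduce to the finitely generated nilpotent $\gamma$-free case via Lemma~\ref{lem:gamma.free.equiv} and Remark~\ref{rem:nilpotent.quotients}, then invoke Proposition~\ref{prop:basis} and the PBW description \eqref{eq:pbw.restrictification} of the restrictification. The only organisational difference is that the paper finishes by checking $\ker(\hat\theta_n)=0$ directly, whereas you recast this as a dimension count, matching $\dim_{\FF_p}\grZ_n(G)$ against $\dim_{\FF_p}(\grM_\bullet(G)\otimes\FF_p)^{[p]}_n$.

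One step in your count deserves a word of caution. From the description
\[
D_n(G)=\{\,g\in G : p^{\,j(n,m)}\mid\alpha_{m,\mu}(g)\ \text{for all } m,\mu\,\}
\]
you assert that the normal-form coordinates ``identify'' $D_n(G)/D_{n+1}(G)$ with $\prod_m\bigl(p^{\,j(n,m)}\Z/p^{\,j(n+1,m)}\Z\bigr)^{|\Lambda_m|}$. The Mal'cev map $g\mapsto(\alpha_{m,\mu}(g))$ is only a bijection of \emph{sets}, not a group homomorphism, so its compatibility with the passage to $D_n(G)/D_{n+1}(G)$ is not automatic: one must check that the fibres of $D_n(G)\to\prod_m\bigl(p^{\,j(n,m)}\Z/p^{\,j(n+1,m)}\Z\bigr)$ are precisely the $D_{n+1}(G)$-cosets, not merely that the fibre over $0$ is $D_{n+1}(G)$. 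The quickest fix is to bypass the bijection and instead exhibit the $\sum_{ip^j=n}|\Lambda_i|$ elements $x_{i,\lambda}^{p^j}D_{n+1}(G)$ as $\FF_p$-linearly independent in $\grZ_n(G)$: a dependence relation would give an element $g=\prod_{i,\lambda}x_{i,\lambda}^{c_{i,\lambda}p^j}\in D_{n+1}(G)$ in normal form with $0\le c_{i,\lambda}<p$, whence Proposition~\ref{prop:basis}(2) forces $p^{\,j(n+1,i)}=p^{\,j+1}\mid c_{i,\lambda}p^{\,j}$ and so $c_{i,\lambda}=0$. This lower bound on $\dim_{\FF_p}\grZ_n(G)$, combined with surjectivity of $\hat\theta_n$, gives equality of dimensions. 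But note that this linear-independence check \emph{is} the paper's injectivity argument, so once the soft step is tightened the two proofs essentially coincide.
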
 
\begin{proof}
	By Lemma \ref{lem:hat.theta.epi}, it is sufficient to prove that $\hat\theta_\bullet$ is injective.
    Recall that any nilpotent quotient $G/\gamma_{c+1}(G)$ of $G$ is also $\gamma$-free by Lemma \ref{lem:gamma.free.equiv}. Thus, we may assume that $G$ is nilpotent of class $c \geq 1$. Using Proposition \ref{prop:basis}, we can fix a basis $\{x_{n, \lambda} \gamma_{n+1}(G)\}_{\lambda \in \Lambda_n}$ for each homogeneous component $\grM_n(G)$ of the Magnus $\Z$-Lie algebra. \\
	Write $R_\bullet$ for the restrictification of $\grM_\bullet(G) \otimes \FF_p$. Using the PBW theorem for restricted Lie algebras (see \cite{Jac41}), we can write a basis for the $n$-th homogeneous component of $R_\bullet$, i.e.
	\[
		R_n = {\rm Span}_{\FF_p}\{ (x_{i, \lambda} \gamma_{i+1}(G))^{[p]^j} \mid i, j \geq 1 \text{ such that } ip^j = n, \, \lambda \in \Lambda_i \}.
	\]
	Let $y \in R_n$, then 
	\[
		y = \sum_{i, \lambda} \alpha_{i, \lambda} (x_{i, \lambda}\gamma_{i+1}(G))^{[p]^j}
	\]
	where $\alpha_{i, \lambda} = \alpha_{i, \lambda}(y)$ is an integer in $\{0, 1, \dots, p-1\}$, the indices $i, j \geq 0$ are such that $ip^j = n$ and $\lambda \in \Lambda_i$.\\
	Suppose $y \in \ker(\hat\theta_n)$, then by Proposition \ref{prop:basis} (ii), we have that $p^{j(n, i)}$ divides each coefficient $\alpha_{i, \lambda}$, forcing $y = 0$. Thus $\hat\theta_n$ is injective for all $n \geq 1$, hence $\hat\theta_\bullet$ is an isomorphism.
\end{proof}

\section{Applications}

In this section we list some examples of groups $G$ for which Theorem A or Theorem B may be applied to. Special emphasis is laid on finding examples for which the following isomorphism holds
\[
	(\grM_\bullet(G) \otimes \FF_p)^{[p]} \cong \grZ_\bullet(G)
\] 

\subsection{Free groups}
A theorem attributed to Magnus states that if $F_n$ is the free group of rank $n$, then the associated Lie algebra $\grM_\bullet(F_n)$ is the free $\Z$-Lie algebra on $n$ generators, in particular $F_n$ is $\gamma$-free. The ranks of each homogeneous component, $M_n(k) = {\rm rk}(\grM_k(F_n))$, were computed by Hall and Witt (see \cite{Bou89}) and they satisfy the following formula:
\begin{equation}\label{eq:hall.witt.ranks}
	M_n(k) = \frac{1}{k} \sum_{d | k} \mu(d) n^{k/d}.
\end{equation}
where $\mu$ is the M\"obius function.
\subsection{One relator groups}
A finitely generated one relator group is a quotient $G \cong F/R$ of a finitely generated free group $F$ by the normal closure of a cyclic subgroup $R = \langle r \rangle^F \unlhd F$, the word $r \in F$ is the defining relation of $G$. For any word $r \in F$, define its \emph{weight} to be the largest integer $w(r) > 0$ such that $r \in \gamma_{w(r)}(G) \setminus \gamma_{w(r)+1}(G)$. As $F$ is residually nilpotent, $r \neq 1$ implies $w(r) < \infty$. Following \cite{Lab70}, we will say that $r$ is \emph{primitive} if it is not a proper power modulo $\gamma_{w(r)+1}(F)$. The following theorem is due to Labute. 
\begin{theorem}[{\cite{Lab70}}]
	Let $G = F/R$ where $F$ is a finitely generated free group and $R$ is the normal closure of the subgroup generated by $r \in F$, a primitive word. Let $\bar{r}$ be the image of $r$ in $\grM_{w(r)}(F)$ and let $\mathfrak r$ be the Lie-ideal generated by $\bar{r}$. Then, there is an isomorphism of graded $\Z$-Lie algebras: $$\grM_\bullet(G) \cong \grM_\bullet(F)/\mathfrak r.$$ 
\end{theorem}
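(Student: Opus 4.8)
The plan is to reduce the assertion to a statement about universal enveloping algebras, which become graded group rings; this follows Labute's strategy. First I would dispose of the surjection. Applying the functor $\grM_\bullet$ to the quotient map $\pi : F \twoheadrightarrow G$ gives a morphism of graded $\Z$-Lie algebras $\grM_\bullet(\pi) : \grM_\bullet(F) \to \grM_\bullet(G)$, which is surjective because $\grM_\bullet(G)$ is generated in degree $1$ and $\pi$ induces a surjection on abelianizations. Since $r \in R = \kernel(\pi)$, the class $\bar r$ lies in $\kernel(\grM_\bullet(\pi))$, hence so does the Lie ideal $\mathfrak r$; therefore $\grM_\bullet(\pi)$ factors through a surjection $\psi_\bullet : \grM_\bullet(F)/\mathfrak r \twoheadrightarrow \grM_\bullet(G)$, and everything reduces to proving $\psi_\bullet$ injective.

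Next I would pass to enveloping algebras. Write $\mathfrak g_\bullet := \grM_\bullet(F)/\mathfrak r$ and let $A_\bullet := \gr_\bullet(\Z F)$ be the associated graded ring of $\Z F$ for the filtration by powers of the augmentation ideal $\omega_\Z(F)$. By Magnus's theorem $A_\bullet$ is the free associative $\Z$-algebra $\Z\langle X_1,\dots,X_n\rangle$ on $n = {\rm rk}(F)$ generators, the canonical map $\Univ(\grM_\bullet(F)) \to A_\bullet$ is an isomorphism, and $\grM_\bullet(F)$ embeds into $A_\bullet$. For $G$ one has only a surjection $\Univ(\grM_\bullet(G)) \twoheadrightarrow \gr_\bullet(\Z G) = A_\bullet/\mathfrak a_\bullet$, where $\mathfrak a_\bullet$ is the graded two-sided ideal associated with the relation ideal $\kernel(\Z F \to \Z G)$. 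The leading term $\rho := (r-1) + \omega_\Z(F)^{w(r)+1} \in A_{w(r)}$, which is exactly the image of $\bar r$ under $\grM_{w(r)}(F) \hookrightarrow A_{w(r)}$, lies in $\mathfrak a_\bullet$; so $\Univ(\psi_\bullet)$ sits in a commuting triangle of graded quotients of $A_\bullet$ relating $\Univ(\mathfrak g_\bullet)$, $\Univ(\grM_\bullet(G))$ and $A_\bullet/\mathfrak a_\bullet$.

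The key step, which I expect to be the main obstacle, is to show that primitivity of $r$ forces both (a) $\mathfrak g_\bullet$ to be a \emph{free} $\Z$-module, so that the PBW theorem yields $\Univ(\mathfrak g_\bullet) \cong A_\bullet/(\rho)$ with $(\rho)$ the two-sided ideal generated by $\rho$, and (b) $\mathfrak a_\bullet = (\rho)$, i.e.\ the relation ideal contributes nothing beyond its leading term, so $\gr_\bullet(\Z G) \cong \Z\langle X\rangle/(\rho)$. Both assertions come down to showing that a primitive homogeneous element $\rho$ of the free $\Z$-Lie algebra $\grM_\bullet(F)$ is \emph{inert} (strongly free) in $A_\bullet = \Z\langle X\rangle$: the two-sided ideal $(\rho)$ is a $\Z$-free direct summand in each degree, $A_\bullet/(\rho)$ is $\Z$-free with Hilbert series $(1 - nt + t^{w(r)})^{-1}$, and one then matches $\mathfrak a_\bullet$ with $(\rho)$ via a Fox-calculus/relation-module computation. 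Carrying this out over $\Z$, where primes cannot be inverted, is precisely where primitivity of $r$ is indispensable; everything else is formal.

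Granting the key step, the composite $\Univ(\mathfrak g_\bullet) \cong A_\bullet/(\rho) \twoheadrightarrow \Univ(\grM_\bullet(G)) \twoheadrightarrow \gr_\bullet(\Z G) = A_\bullet/(\rho)$ is induced by the identity of $\Z\langle X\rangle$, hence is an isomorphism; therefore $\Univ(\psi_\bullet)$ is an isomorphism. Since $\mathfrak g_\bullet$ is $\Z$-free, PBW provides an embedding $\iota:\mathfrak g_\bullet \hookrightarrow \Univ(\mathfrak g_\bullet)$, and naturality gives $\Univ(\psi_\bullet)\circ\iota = \iota'\circ\psi_\bullet$ with $\iota'$ the analogous map for $\grM_\bullet(G)$; as the left-hand side is injective, $\psi_\bullet$ is injective. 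Combined with the surjectivity from the first step, $\psi_\bullet : \grM_\bullet(F)/\mathfrak r \to \grM_\bullet(G)$ is an isomorphism of graded $\Z$-Lie algebras, as claimed.
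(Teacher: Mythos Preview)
The paper does not supply a proof of this theorem; it is quoted verbatim from Labute's 1970 paper and used as a black box in the applications section. There is therefore no ``paper's own proof'' to compare against.

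Your outline does follow Labute's actual strategy: produce the obvious surjection $\grM_\bullet(F)/\mathfrak r \twoheadrightarrow \grM_\bullet(G)$, pass to the associated graded of the integral group ring, identify it with the free associative algebra modulo the two-sided ideal generated by the leading term $\rho$ of the relator, and deduce injectivity via PBW. As a plan this is correct, and you have correctly identified where the real content lies. The ``key step'' you flag---that a primitive homogeneous Lie element $\rho$ is inert (strongly free) in $\Z\langle X_1,\dots,X_n\rangle$, so that $A_\bullet/(\rho)$ is $\Z$-free with the expected Hilbert series and $\mathfrak a_\bullet = (\rho)$---is precisely the substance of Labute's paper and is not a formality; it requires his careful analysis of the relation module and cannot simply be asserted. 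Everything else in your sketch is indeed formal once that step is granted.
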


As a consequence, any such one-relator groups $G$ is $\gamma$-free, and the ranks of $\grM_\bullet(M)$ can be computed via a formula analogous to Equation \eqref{eq:hall.witt.ranks}. \\
\noindent
An important class of examples of $\gamma$-free one relator groups is given by the fundamental groups of orientable, compact surfaces. If $S_g$ is an orientable, compact surface of genus $g$, its fundamental group is given by 
\begin{equation}
    \pi_1(S_g) = \langle a_1, b_1, \dots, a_g, b_g \mid [a_1, b_1][a_2, b_2] \cdots [a_g, b_g]  \rangle.
\end{equation}

\subsection{Free products and Magnus groups}
Recall that a group $G$ is said to be \emph{residually nilpotent} if the intersection of all the terms of the lower central series, $\bigcap\nolimits_{n \geq 1} \gamma_n(G)$, 
is the trivial subgroup. Groups which are both residually nilpotent and $\gamma$-free, often referred to as \emph{Magnus groups}, have been the subject of many investigations in the literature (see for example \cite{Bau67}). We record the following theorem due to Shmelkin.
\begin{theorem}[{\cite{Shm70}}]\label{th:magnus.groups}
    The free product of Magnus groups is again a Magnus group, moreover one has that the Magnus $\Z$-Lie algebra functor restricted to the class of Magnus group commutes with coproducts.
\end{theorem}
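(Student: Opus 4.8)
The plan is to construct, for Magnus groups $G_1,G_2$, a natural isomorphism $\grM_\bullet(G_1\ast G_2)\cong\grM_\bullet(G_1)\amalg\grM_\bullet(G_2)$, where $\amalg$ denotes the coproduct in the category of $\mathbb N$-graded $\Z$-Lie algebras (the ``free product of Lie rings''); the case of an arbitrary family then follows, since an infinite free product of groups is the filtered colimit of its finite sub-free-products, the same holds for coproducts of graded Lie rings, and both $\grM_\bullet$ and $\amalg$ commute with filtered colimits. The inclusions $\iota_i\colon G_i\hookrightarrow G_1\ast G_2$ induce Lie ring maps, hence by the universal property of $\amalg$ a natural morphism
\[
    \Phi\colon\grM_\bullet(G_1)\amalg\grM_\bullet(G_2)\longrightarrow\grM_\bullet(G_1\ast G_2).
\]
This $\Phi$ is surjective, because for \emph{any} group $G$ the Lie ring $\grM_\bullet(G)$ is generated by its degree-one part $G/\gamma_2(G)$ (the relation $\gamma_n(G)=[\gamma_{n-1}(G),G]$ forces $\grM_n(G)=[\grM_{n-1}(G),\grM_1(G)]$), and $\grM_1(G_1\ast G_2)=G_1^{\mathrm{ab}}\oplus G_2^{\mathrm{ab}}=\grM_1(G_1)\oplus\grM_1(G_2)$ already lies in the image of $\Phi$. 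The whole problem is thus the injectivity of $\Phi$.

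First I would prove injectivity after $-\otimes_\Z\mathbb Q$. Since the group ring functor is left adjoint to the group-of-units functor it preserves coproducts, so $\Z[G_1\ast G_2]\cong\Z G_1\ast_\Z\Z G_2$, the free product of augmented $\Z$-algebras. A direct inspection shows that the $\omega$-adic filtration of this free product coincides with the total-degree filtration coming from the $\omega$-adic filtrations of the two factors, so that $\gr_\omega\Z[G_1\ast G_2]\cong\gr_\omega\Z G_1\ast_\Z\gr_\omega\Z G_2$ naturally, and the same after tensoring with $\mathbb Q$. Invoking the classical isomorphism $\gr_\omega\mathbb QG\cong\Univ(\grM_\bullet(G)\otimes\mathbb Q)$, natural in $G$ (see \cite{Laz54}, \cite{Bou89}), together with the fact that $\Univ$ and $-\otimes\mathbb Q$ are left adjoints and hence preserve coproducts, one assembles a natural isomorphism $\Univ(\grM_\bullet(G_1\ast G_2)\otimes\mathbb Q)\cong\Univ\bigl((\grM_\bullet(G_1)\amalg\grM_\bullet(G_2))\otimes\mathbb Q\bigr)$ which, by naturality and the universal property of $\amalg$, is exactly $\Univ(\Phi\otimes\mathbb Q)$. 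Over $\mathbb Q$ one has $\Prim\circ\Univ\cong\mathrm{id}$, so $\Univ$ reflects isomorphisms on graded connected Lie algebras; therefore $\Phi\otimes\mathbb Q$ is an isomorphism and, by flatness of $-\otimes\mathbb Q$, $\kernel(\Phi)$ is a torsion $\Z$-module.

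To remove this torsion I would invoke that each $\grM_\bullet(G_i)$ is, by $\gamma$-freeness, a free $\Z$-module concentrated in degrees $\geq1$, and that the coproduct of such graded Lie rings is again free as a $\Z$-module in every degree. The latter can be proved degree by degree by means of Lazard's elimination theorem, or by exhibiting an explicit Hall-type basis of the coproduct; the point is that over $\Z$ one cannot simply appeal to the PBW theorem as over $\mathbb Q$. Granting this, the source of $\Phi$ is torsion-free, so $\kernel(\Phi)=0$; combined with surjectivity, $\Phi$ is an isomorphism. In particular $\grM_n(G_1\ast G_2)\cong(\grM_\bullet(G_1)\amalg\grM_\bullet(G_2))_n$ is free abelian for all $n$, i.e.\ $G_1\ast G_2$ is $\gamma$-free.

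Finally, $G_1\ast G_2$ is residually nilpotent. Because $\grM_\bullet(G_i)$ is torsion-free and $G_i$ is residually nilpotent, no nontrivial element of $G_i$ has a power lying strictly deeper in the lower central series, so $\sqrt{\gamma_n(G_i)}=\gamma_n(G_i)$ and hence $\bigcap_n\omega_\mathbb Q(G_i)^n=0$; that is, $\mathbb QG_i$ embeds into its $\omega$-adic completion. Using the decomposition $\mathbb Q[G_1\ast G_2]\cong\mathbb QG_1\ast_\mathbb Q\mathbb QG_2$ and the flatness over $\mathbb Q$ of all tensor products entering the free product, one checks that $\mathbb Q[G_1\ast G_2]$ embeds into the completed free product of $\widehat{\mathbb QG_1}$ and $\widehat{\mathbb QG_2}$, whence $\bigcap_n\omega_\mathbb Q(G_1\ast G_2)^n=0$ and therefore $\bigcap_n\gamma_n(G_1\ast G_2)\subseteq\bigcap_n\sqrt{\gamma_n(G_1\ast G_2)}=\{1\}$. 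Thus $G_1\ast G_2$ is a Magnus group and $\Phi$ exhibits $\grM_\bullet$ as preserving coproducts on Magnus groups. I expect the two genuinely non-formal ingredients to be the integral statement that coproducts of $\Z$-free Lie rings remain $\Z$-free (trivial over $\mathbb Q$ by PBW, but needing the elimination theorem over $\Z$) and the residual-nilpotence step, which is the only place where ``residually nilpotent'' is used beyond $\gamma$-freeness.
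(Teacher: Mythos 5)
The paper does not prove this statement: it is quoted from Shmelkin's 1970 paper and used as a black box, so there is no internal proof to compare your argument against. Judged on its own, your outline has the standard architecture for results of this type: the canonical map $\Phi$ out of the Lie-ring coproduct, surjectivity from $1$-generation of $\grM_\bullet$, a rational isomorphism via Quillen's theorem $\gr_\omega\mathbb{Q}G\cong\Univ(\grM_\bullet(G)\otimes\mathbb{Q})$ together with coproduct-preservation of left adjoints, and then a separate integral and residual-nilpotence analysis. That skeleton is sound, and you have correctly located where the real work sits.

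However, the steps you flag as ``non-formal'' are exactly where the content of Shmelkin's theorem lives, and as written they range from unproved to logically gapped. (i) The identification $\gr_\omega(\mathbb{Q}G_1\ast\mathbb{Q}G_2)\cong\gr_\omega\mathbb{Q}G_1\ast\gr_\omega\mathbb{Q}G_2$ is not a ``direct inspection'': the map from the free product of the associated gradeds is obviously surjective, but its injectivity needs the explicit direct-sum decomposition of a free product of augmented algebras and an induction on word length. (ii) The $\Z$-freeness of the coproduct of $\Z$-free graded Lie rings is the entire integral half of the theorem; pointing to Lazard elimination is the right idea, but without carrying it out the conclusion that $G_1\ast G_2$ is $\gamma$-free is not established. (iii) Most seriously, the inference ``$\sqrt{\gamma_n(G_i)}=\gamma_n(G_i)$ for all $n$, hence $\bigcap_n\omega_{\mathbb{Q}}(G_i)^n=0$'' conflates triviality of $\bigcap_n D_n(G_i,\mathbb{Q})$, a statement about group elements, with triviality of $\bigcap_n\omega_{\mathbb{Q}}(G_i)^n$ as an ideal of the group algebra; the latter is Jennings' embedding theorem for torsion-free nilpotent group rings plus a separability argument over the residual family, not a formal consequence of the former. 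Relatedly, deducing $\bigcap_n\omega^n=0$ for the free product from an embedding into a ``completed free product'' presupposes that the filtration on that completion is Hausdorff, which is essentially the assertion being proved; the usual repair is to construct the completed free product explicitly as an inverse limit of graded truncations where Hausdorffness is visible. None of these steps is false, but as it stands the proposal is a plan for a proof rather than a proof.
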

\noindent
This result allows us to construct, via free products, a wide variety of examples of (residually nilpotent) groups for which Theorem B holds.
\subsection{Almost direct products of free groups}

A semidirect product $G = N \rtimes H$ for which the inclusion $[N, H] \leq [N, N]$ holds, is said to be \emph{almost-direct}. Equivalently, such a group is almost-direct if the conjugation action of $H$ on $N$ descends to a trivial action on the abelianization $N/[N, N]$. The Magnus $\Z$-Lie algebra of an almost direct product was studied in detail by Falk and Randell:
\begin{theorem}[\cite{FR85}]\label{th:falk.randell}
	Let $G = N \rtimes H$ be an almost direct product. Then for all $n \geq 1$ one has the following isomorphisms of abelian groups:
\[
		\grM_n(G) \cong \grM_n(N) \oplus \grM_n(H)
\]
\end{theorem}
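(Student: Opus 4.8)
The statement is classical (it is due to Falk and Randell), but let me indicate how I would prove it. The plan is to show that the lower central series of $G = N \rtimes H$ is compatible with the splitting, in the precise sense that $\gamma_n(G) = \gamma_n(N)\,\gamma_n(H)$ is an internal semidirect product for every $n \geq 1$, where $H$ is identified with the given complement of $N$ in $G$; the asserted decomposition of $\grM_n(G)$ then drops out of a split short exact sequence.

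The technical heart is the estimate $[\gamma_j(N), \gamma_k(H)] \leq \gamma_{j+k}(N)$ for all $j, k \geq 1$. First I would treat the case $k = 1$, i.e.\ $[\gamma_j(N), H] \leq \gamma_{j+1}(N)$, by induction on $j$: the base case $j = 1$ is exactly the almost-direct hypothesis $[N, H] \leq [N, N] = \gamma_2(N)$, and the inductive step writes $\gamma_{j+1}(N) = [\gamma_j(N), N]$ and applies the three-subgroup (Hall--Witt) lemma inside the normal subgroup $\gamma_{j+2}(N) \unlhd G$, bounding $[[\gamma_j(N), N], H]$ by $[[N, H], \gamma_j(N)] \leq [\gamma_2(N), \gamma_j(N)] \leq \gamma_{j+2}(N)$ together with $[[H, \gamma_j(N)], N] \leq [\gamma_{j+1}(N), N] = \gamma_{j+2}(N)$, the latter by the inductive hypothesis. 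The general case would then follow by a second induction, on $k$, carried out for all $j$ simultaneously: writing $\gamma_{k+1}(H) = [\gamma_k(H), H]$, the three-subgroup lemma reduces $[\gamma_j(N), [\gamma_k(H), H]]$ to $[[\gamma_k(H), \gamma_j(N)], H] \leq [\gamma_{j+k}(N), H] \leq \gamma_{j+k+1}(N)$ and $[[H, \gamma_j(N)], \gamma_k(H)] \leq [\gamma_{j+1}(N), \gamma_k(H)] \leq \gamma_{j+k+1}(N)$, invoking the $k = 1$ case and the inductive hypothesis.

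Granting this estimate, I would prove $\gamma_n(G) = \gamma_n(N)\gamma_n(H)$ by induction on $n$. For the inductive step, expand $\gamma_{n+1}(G) = [G, \gamma_n(G)] = [NH,\ \gamma_n(N)\gamma_n(H)]$ via the usual commutator identities into the four subgroups $[N, \gamma_n(N)] = \gamma_{n+1}(N)$, $[N, \gamma_n(H)]$, $[H, \gamma_n(N)]$, $[H, \gamma_n(H)] = \gamma_{n+1}(H)$ and their $G$-conjugates; the estimate places each of these inside $\gamma_{n+1}(N)\gamma_{n+1}(H)$, which one checks is normal in $G$ (again using the estimate to absorb conjugation by $N$ into $\gamma_{n+1}(H)$ modulo $\gamma_{n+1}(N)$), so $\gamma_{n+1}(G) \leq \gamma_{n+1}(N)\gamma_{n+1}(H)$; the reverse inclusion is immediate from $\gamma_n(N), \gamma_n(H) \leq \gamma_n(G)$. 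Since $\gamma_n(N) \unlhd G$ is characteristic in $N$ and $\gamma_n(N) \cap \gamma_n(H) \leq N \cap H = 1$, the product is semidirect; in particular $\gamma_n(G) \cap N = \gamma_n(N)$ and $\gamma_n(N) \cap \gamma_{n+1}(G) = \gamma_{n+1}(N)$.

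Finally, on associated graded pieces the inclusion $N \hookrightarrow G$ induces an injection $\grM_n(N) \hookrightarrow \grM_n(G)$ (injectivity being the identity $\gamma_n(N) \cap \gamma_{n+1}(G) = \gamma_{n+1}(N)$ just recorded), the quotient $G \twoheadrightarrow H$ induces a surjection $\grM_n(G) \twoheadrightarrow \grM_n(H)$ whose kernel is precisely the image of $\grM_n(N)$, and the inclusion $H \hookrightarrow G$ induces a splitting $\grM_n(H) \to \grM_n(G)$ of this surjection; hence $\grM_n(G) \cong \grM_n(N) \oplus \grM_n(H)$ as abelian groups (and, since the estimate shows $\grM_\bullet(N)$ is a graded ideal and $\grM_\bullet(H)$ a graded subalgebra, even as an extension of $\Z$-Lie algebras). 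The one genuinely delicate point is the commutator estimate: the two inductions must be ordered correctly --- first $k = 1$ by induction on $j$, then all $j$ at once by induction on $k$ --- so that each appeal to the three-subgroup lemma falls back on a strictly earlier instance, and one must check at each application that the ambient subgroup $\gamma_{j+k}(N)$ really is normal in $G$, which it is, being characteristic in $N$. Everything after that is formal.
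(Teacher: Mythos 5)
Your argument is correct and is essentially the standard Falk--Randell proof: the commutator estimate $[\gamma_j(N),\gamma_k(H)]\leq\gamma_{j+k}(N)$ via the three-subgroup lemma, the resulting splitting $\gamma_n(G)=\gamma_n(N)\rtimes\gamma_n(H)$, and the split exact sequence on graded pieces all check out, with the inductions ordered so that each step falls back on an earlier one. The paper itself gives no proof of this statement (it only cites \cite{FR85}), so there is nothing to compare against beyond noting that your proof matches the cited source's approach.
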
 
\noindent
In particular almost direct products of $\gamma$-free groups are again $\gamma$-free, hence Theorem B can be applied.

\subsection{Hyperplane and Toric arrangements}
A \emph{complex hyperplane arrangement} in $\mathbb C^\ell$ is a finite collection $\mathcal A = \{H_1, \dots, H_n\}$ of codimension $1$ vector subspaces of $\mathbb C^\ell$. The complement space $M(\mathcal A) = \mathbb C^\ell \setminus \cup_{H \in \mathcal A} H$ is a connected manifold whose topology has been the subject of extensive investigations. Many interesting questions can be asked about its cohomology ring $H^\bullet(M(\mathcal A), \Z)$ and its fundamental group $G(\mathcal A) = \pi_1(M(\mathcal A), *)$. 
The \emph{intersection lattice} of $\mathcal A$, denoted $\mathbb L(\mathcal A)$, is defined as the set of all the possible nonempty intersections of the hyperplanes of $\mathcal A$, ordered by reverse inclusion. The combinatorial information contained in the lattice $\mathbb L(\mathcal A)$ can be used to define the so-called Orlik-Solomon algebra $A^\bullet(\mathcal A)$ which, by the classic Brieskorn-Orlik-Solomon theorem \cite{OS80}, is isomorphic to the cohomology ring $H^\bullet(M(\mathcal A),\Z)$. A hyperplane arrangement $\mathcal A$ is said to be \emph{supersolvable} if its intersection lattice is a supersolvable lattice i.e. if it admits a maximal chain of modular elements (see \cite{Sta72}). This purely combinatorial condition has surprising implications in the topology of the arrangement. In particular for a supersolvable arrangement $\mathcal A$ in $\mathbb C^\ell$, it implies that the complement manifold $M(\mathcal A)$ sits on top of a tower of fibrations:
\[
	M_\ell = M(\mathcal  A) \longrightarrow M_{\ell-1} \longrightarrow \cdots \longrightarrow M_1 
\]
where, for $1 \leq k \leq \ell-1$, the space $M_k = M(\mathcal A_k)$ is the complement of a supersolvable arrangement $\mathcal A_k$ in $\mathbb C^k$. Moreover, the generic fiber of each fibration is homeomorphic to a copy of $\mathbb C$ minus a finite number of points. As a consequence, the fundamental group of a supersolvable arrangement splits as an iterated semidirect product of free groups of finite rank: 
\begin{equation}\label{eq:ss.arr}
    G(\mathcal A) = F_{d_\ell} \rtimes (F_{d_{\ell-1}} \rtimes ( \cdots \rtimes (F_{d_2} \rtimes F_{d_1} )))
\end{equation} 
where the ranks (usually called the \emph{exponents} of the arrangement) are given by $d_k = |\mathcal A_k| - |\mathcal A_{k-1}|$ 
for $2 \leq k \leq \ell$ and $d_1 = |\mathcal A_1|$. Since the semidirect products in \eqref{eq:ss.arr} are almost-direct products by \cite[Proposition 2.5]{FR85}, the fundamental group of a supersolvable hyperplane arrangement complement is $\gamma$-free.

\subsubsection{Pure braid groups}
One of the most important examples of supersolvable hyperplane arrangement is given by the \emph{braid arrangement} \[
    \mathcal Br_n = \{ H_{ij} \mid 1 \leq i < j \leq n+1\}
\]
in $\mathbb C^{n+1}$ (for $n \geq 2$), where $H_{ij}$ is the hyperplane defined by the equation $x_j - x_i = 0$. The complement of this arrangement can be identified with the \emph{configuration space} of $n$ distinct points in $\mathbb C$, so its fundamental group
is isomorphic to the pure braid group on $n$ strands, i.e. $G(\mathcal Br_n) \cong {\rm PB}_n$. Thus, the pure braid group on $n$ strands is $\gamma$-free. An presentation of the Magnus $\Z$-Lie algebra was given by Kohno in \cite{Koh85}, namely $\grM_\bullet({\rm PB}_n)$ is isomorphic to the quotient of the free $\Z$-Lie algebra over the set $\{X_{ij} \mid i \leq i < j \leq n+1\}$ by the ideal generated by the elements:
\begin{itemize}
    \item $[X_{ik}, X_{ij}+X_{jk}]$ and $[X_{jk}, X_{ij}+X_{ik}]$ for all $1 \leq i < j < k \leq n+1$;
    \item $[X_{ir}, X_{js}]$ for all $i < r$, $j < s$, $i < j$ and $r \neq j$, $r \neq s$.
\end{itemize}
This Lie algebra can be identified with the \emph{holonomy Lie algebra} of the associated hyperplane arrangement.

\subsubsection{Strictly supersolvable toric arrangements}
The theory of hyperplane arrangement naturally generalizes to a setting where the ambient space $\mathbb C^\ell$ is replaced by the complex torus $\mathbb T^\ell = (\mathbb C^\times)^\ell$ and the linear hyperplanes are replaced by kernels of monomial characters $\chi : \mathbb T^\ell \to \mathbb C^\times$ called \emph{hypertori}. A \emph{toric arrangement} in $\mathbb T^\ell$ is a finite collection $\mathcal B$ of hypertori. Given a toric arrangement $\mathcal B$ one can study, similarly to the case of hyperplane arrangements, the complement manifold $M(\mathcal B) = \mathbb T^\ell \setminus \cup_{K \in \mathcal B}K$, its cohomology ring $H^\bullet(M(\mathcal B), \Z)$ and its fundamental group $G(\mathcal B) = \pi_1(M(\mathcal B), *)$. \\
The combinatorial object associated to a toric arrangement $\mathcal B$ is the \emph{poset of layers} $\mathbb L(\mathcal B)$ whose elements are the connected components of all the possible nonempty intersections of the hypertori of $\mathcal B$. 
The theory of toric arrangements shows some significant differences from its linear counterpart because of the different topology of the ambient space: two connected hypertori may have a non-connected intersection and even a single hypertorus may consist of several connected components. For this reason, the poset of layers is not in general a lattice, but rather a locally geometric poset. Extending the definition of supersolvable lattices allows to define the class of \emph{supersolvable toric arrangements}. This definition and the resulting theory is developed in \cite{BD24}. \\
\noindent
By \cite[Theorem 3.4.3]{BD24}, the fundamental group $G(\mathcal B)$ of the complement of a supersolvable toric arrangement $\mathcal B$ is an iterated semidirect product of free groups of finite rank. A further combinatorial restiction on the poset of layers called \emph{strict} supersolvability ensures that the semidirect products are almost-direct (see \cite[Theorem 5.3.4 and Theorem 5.3.10]{BD24}). We have then that the fundamental groups of complements of strictly supersolvable toric arrangements are $\gamma$-free groups.

\subsection{Uniformly powerful pro-$p$ groups}
\label{sss:unif}
A finitely generated pro-$p$ group $G$ is said to be uniform,
if it is torsion-free and satisfies $\gamma_2(G)\leq G^p$ if $p$ is odd, and $\gamma_2(G)\leq G^4$ if $p=2$.
It is well-known that for these groups one has
$D_n(G,\FF_p)=G^{p^n}$ and that the $p$-map $(\argu)^{[p]}\colon\gr^{\mathcal Z}_n(G) \to\gr^{\mathcal Z}_{n+1}(G)$ (induced by the group theoretic $p$-power map) is injective (see \cite{DDSMS}). This yields the description of the Magnus-Zassenhaus kernel given in Corollary~\ref{cor2}.
By definition, neither non-trivial groups of exponent $p$ nor non-trivial uniform pro-$p$ groups can be $\gamma$-free.
Nevertheless, there are examples of $p$-adic analytic $\gamma$-free pro-$p$ groups, e.g.,
\begin{equation}
\label{eq:utrZp}
G = \Bigg\{\left(\begin{matrix} 1&x & z \\ 0 & 1&y\\
0&0&1\end{matrix}\right)\ \Bigg\vert\ x,y,z\in\Z_p\,\Bigg\}
\end{equation} 
is a $p$-adic analytic $\gamma$-free
pro-$p$ group of nilpotency class $2$.

\subsection{A class of mild non $\gamma$-free pro-$p$ groups}
\label{sss:mild}
For a positive integer $n\geq 3$ and an odd prime $p$, let
\begin{equation}
    \label{eq:labu1}
    G_n = \langle\, x_0, \dots, x_{n-1} \mid x_k^p = [x_{k-1}, x_k],\, k \in \Z/n\Z \,\rangle_{\text{pro-}p}
\end{equation}
These groups were studied by J. Labute  in \cite{Lab06}
There it is shown that the groups $G_n$ are mild pro-$p$ groups for odd integers $n
\geq 5$. More precisely, their Zassenhaus restricted $\FF_p$-Lie algebra
coincides with the restrictification of the Magnus $\Z$-Lie algebra tensored with $\FF_p$ and equals
\begin{equation}
\label{eq:labu2}
    \grZ(G_n)\simeq\grZ(F)/\langle r_1, \dots, r_n \rangle_{\rLie},
\end{equation}
where $F$ is the free group of rank $n$ and $r_k=[x_{k-1},x_k]$ (see \cite{Lab06}).
Therefore, the group $G_n$ satisfies the conclusion of
Theorem B. However, as the abelianzation $G_n/\gamma_2(G_n)$ has $p$-torsion, $G_n$ is not $\gamma$-free. 

\bibliographystyle{plain}
\bibliography{refs}

\end{document}